%
%
%
%
%
%
%
\documentclass[smallextended]{svjour3}       
\smartqed  
\usepackage{amsmath, amssymb}

 \usepackage{graphicx}

\newtheorem{prop}{Proposition}
\newtheorem{rem}{Remark}

\def\C#1{{\mathcal {#1}}}

\begin{document}

\title{Semidiscrete variable time-step $\theta$-scheme for nonmonotone evolution inclusion\thanks{The research was supported by the Marie Curie International Research Staff Exchange Scheme Fellowship within the 7th European Community Framework Programme under Grant Agreement No. 295118,
by the National Science Center of Poland under grant no. N N201 604640, and by the International Project co-financed by the Ministry of Science and Higher Education of Republic of Poland under grant no. W111/7.PR/2012}
}

\titlerunning{Semidiscrete $\theta$-scheme for nonmonotone evolution inclusion}        

\author{Piotr Kalita}

\institute{P. Kalita \at
              Institute of Computer Science, Faculty of Mathematics and Computer Science, Jagiellonian University, ul. \L{}ojasiewicza 6, 30348 Krakow, Poland\\
              \email{piotr.kalita@ii.uj.edu.pl}  
}

\date{Received: date / Accepted: date}

\maketitle

\begin{abstract}
In this paper we show the convergence of a semidiscrete time stepping $\theta$-scheme on a time grid of variable length to the solution
of parabolic operator differential inclusion in the framework of evolution triple. The multifunction is assumed to be strong-weak upper-semicontinuous and to have nonempty, closed and convex values, while the quasilinear operator present in the problem is required to be pseudomonotone, coercive and satisfy the appropriate growth condition. The convergence of piecewise constant and piecewise linear interpolants constructed on the solutions of time discrete problems is shown. Under an additional assumption on the sequence of time grids and regularity of quasilinear operator strong convergence results are obtained.

\keywords{Operator differential inclusion \and time approximation \and $\theta$-scheme \and semidiscrete problem \and convergence}
\subclass{34G25 \and 47J22 \and 65M12 \and 47H05}

\end{abstract}

\section{Introduction}

This paper deals with the numerical analysis of time-discrete scheme for the abstract operator initial value problem
for a quasilinear parabolic inclusion, where the multivalued term is a multifunction with nonempty, closed and convex values which is strong-weak upper-semicontinuous. The problem is considered in the standard framework of evolution triple of spaces $V\subset H\subset V^*$, where the embeddings are assumed to be compact and the multivalued term is defined on a reflexive Banach space $U$ which is associated with the space $V$ through a linear and compact mapping $\iota:V\to U$.
Such settings constitute a unified framework for two cases: one, where $U=H$ and $\iota$ is the embedding operator and another, where $V=H^1(\Omega)$ or its closed subspace for an open and bounded set $\Omega\subset \mathbb{R}^n$ and $\iota$ is the trace operator. In the first case the multivalued term represents the source term and in the second one, it represents the multivalued Robin type boundary conditions.

An important class of multifunctions that satisfy the assumptions presented here are the Clarke subdifferentials of locally Lipschitz functionals \cite{Clarke1993}.
Such multifunctions can be obtained by the procedure of ''filling in'' with vertical intervals the gaps in the graphs of discontinuous functions \cite{Chang1981}.

Quasilinear operators that appear in the inclusion are assumed to be coercive, pseudomonotone and satisfy an appropriate growth condition. For the inclusion
the time discrete difference $\theta$-scheme is formulated on the time mesh of variable step length. The case of the variable length time grid is important from the numerical point of view since it appears, for example, in the adaptive time stepping schemes. The convergence of a subsequence of analyzed scheme to one of possibly many solutions of the original problem is shown for $\theta\in [\frac{1}{2},1]$ using the approach based on the a priori estimates for piecewise linear and piecewise constant interpolants constructed on the time discrete solutions. The main tool used to pass to the limit in the multivalued term is the generalization of the Lions-Aubin Lemma proved in \cite{Kalita2012} (see Proposition \ref{prop:embedding} is the sequel) which is used for the sequence of piecewise constant interpolants.

Weak solutions of partial differential inclusions with nonmonotone multifunctions, in general, cannot be expected to exhibit additional regularity \cite{Kalita2012JMAA}. The results presented in the present paper show that even in nonsmooth and nonmonotone cases, a numerical approximation is still valid and one can expect the convergence of the sequence of the approximate solutions.

The paper can be considered, on one hand, as the follow up to the results of \cite{Kalita2012}, where the backward Euler scheme for the autonomous problem on equidistant time grid was studied, and the multivalued term had the form of the Clarke subgradient, and, on the other hand, as the extension of results of \cite{Emmrich2009} where the case of equation is analyzed.

The assumptions of the present paper are more general that the ones in \cite{Emmrich2009}. Quasilinear operator here is pseudomonotone, while in \cite{Emmrich2009} it is monotone and hemicontinuous. This extension allows to treat the strongly continuous perturbation as the part of the operator and therefore the convergence of the scheme
with this perturbation is proved without additional assumptions as in \cite{Emmrich2009}, where a rather restrictive condition on time grids is needed to handle
the strongly continuous perturbation (see Lemma 3 and Theorem 5 in \cite{Emmrich2009} and Theorem \ref{thm:improved} in Section \ref{sec:improved} where this condition is used to derive some improved convergence results).

The important assumption which is used here, that does not appear in \cite{Emmrich2009}, is the regularity of the sequence of time grids ($H(t)(ii)$ in what follows). This assumption is needed here only to deal with the multivalued term. Note that it was used in \cite{Carstensen1999}, where the backward Euler scheme on a variable time grid for the case of monotone multifunction (i.e. variational inequality) was analyzed. Another paper in which the thorough analysis of the monotone multifunction is delivered is \cite{Nochetto2000}, where also a posteriori error estimates are derived.

Both the quasilinear operator and multivalued term are assumed here to depend on time. They are approximated by means of the $0$-th order Cl\'{e}ment quasi-interpolation according to Remark 8.21 of \cite{Roubicek2005}. It is shown here that no additional time smoothness of the operator present in the equation, save for measurability, is needed for the scheme to converge. Note that in \cite{Emmrich2009} it is assumed that the quasilinear operator depends on time continuously. Here, under an increased time smoothness, the improved convergence result is shown: Theorem \ref{thm:improved} shows that if the quasilinear operator is H\"{o}lder continuous with respect to time, then we have pointwise strong convergence in $H$ of piecewise linear interpolants for \textit{all} $t$. We remark that the convergence for \textit{almost all} $t$ follows by Lions-Aubin Lemma but sometimes, for example while investigating the upper semicontinuous convergence of global attractors of semidiscrete schemes to the attractor of time continuous problem, one needs the pointwise convergence for all $t$ (see \cite{Kloeden2009}).

We mention that the main result of this article can be understood as the existence result, which is more general than the ones known previously for differential inclusions with Clarke subgradient (hemivariational inequalities): for example in \cite{Liu2000} only the case of multivalued source term is analyzed and in \cite{Migorski2004} the existence for the boundary case but with $p=2$ is shown (we consider here $p\in (1,\infty)$). Moreover the assumption $H(F)(iv)$ below is more general than the sign condition considered, for example, in \cite{Migorski2004} (see hypothesis $H(j)(iv)$ in \cite{Migorski2004}).

The plan of the paper is the following. In Section \ref{sec:setup} the problem setting and assumptions as well as formulations of time continuous and time discrete problems are presented. Section \ref{sec:pseudo} is devoted to some auxiliary results on pseudomonotonicity. In Section \ref{sec:main} the a priori estimates for time discrete solutions are derived and the convergence of semidiscrete scheme is proved. In Section \ref{sec:improved} some results on the strong convergence of approximate solutions are shown.

\section{Problem setting} \label{sec:setup}

Let $V\subset H\subset V^*$ be an evolution triple of spaces where all the embeddings are assumed to be continuous, dense and compact. The space $V$ is assumed to be a separable and reflexive Banach space while the space $H$ is a Hilbert space. Embedding between $V$ and $H$ will be denoted by $i:V\to H$. The norm of $V$ will be denoted without subscript while all other norms will have subscripts denoting the corresponding spaces. The duality pairing between $V^*$ and $V$ will be denoted by $\langle\cdot ,\cdot\rangle$ while for other spaces this symbol will be used with appropriate subscript. The scalar product in $H$ will be denoted by $(\cdot, \cdot)$. Furthermore, let $U$ be a reflexive Banach space and let $\iota$ be a linear, continuous and compact mapping $\iota: V\to U$. By $\iota^*:U^*\to V^*$ we denote the mapping adjoint to $\iota$ defined as $\langle\iota^*u,v\rangle=\langle u, \iota v\rangle_{U^*\times U}$. Let  $T>0$ and $1< p <\infty$. The letter $q$ will always denote the exponent conjugate to $p$, i.e. $1/p+1/q=1$. We write $\C{V}=L^p(0,T;V)$, $\C{H}=L^2(0,T;H)$, $\C{V}^*=L^q(0,T;V^*)$ and $\C{U}=L^p(0,T;U)$. The Nemytskii mappings for $\iota$ and $\iota^*$ will be denoted by the same symbols.

We will now remind several definitions of various types of pseudonomotone operators in which $X$ is always assumed to be a real and reflexive Banach space and $X^*$ denotes its dual.

\begin{definition} (see \cite{Zeidler1990}, Chapter 27) An operator $A:X\to X^*$ is called pseudomonotone, if $v_n\to v$ weakly in $X$ and $\limsup_{n\to\infty}\langle Av_n, v_n-v\rangle\leq 0$ imply that for every $y\in X$ we have $\langle Av, v-y\rangle \leq \liminf_{n\to\infty}\langle Av_n, v_n-y\rangle$.
\end{definition}

\begin{definition} (see \cite{Zeidler1990}, Chapter 27) An operator $A:X\to X^*$ is called to be of type $(S)_+$, if $v_n\to v$ weakly in $X$ and $\limsup_{n\to\infty}\langle Av_n, v_n-v\rangle\leq 0$ imply that $v_n\to v$ strongly in $X$.
\end{definition}

\noindent The next two definitions are natural generalizations of $L$-pseudo\-mono\-tonicity and $(S)_+$ property (see \cite{DMP2003Appl}, Chapter 1.3). In both of them, $W$ is a Banach space such that $W\subset X$ with a continuous embedding.

\begin{definition}
An operator $A:X\to X^*$ is called $W$-pseudomonotone (or pseudomonotone with respect to $W$), if $v_n\to v$ weakly in $X$, where $\{v_n\}_{n=1}^\infty$ is a sequence bounded in $W$ and $\limsup_{n\to\infty}\langle Av_n, v_n-v\rangle\leq 0$ imply that for every $y\in X$, we have $\langle Av, v-y\rangle \leq \liminf_{n\to\infty}\langle Av_n, v_n-y\rangle$.
\end{definition}

\begin{definition}
An operator $A:X\to X^*$ is called $W$-$(S)_+$ (or $(S)_+$ with respect to $W$), if $v_n\to v$ weakly in $X$, where $\{v_n\}_{n=1}^\infty$ is a sequence bounded in $W$ and $\limsup_{n\to\infty}\langle Av_n, v_n-v\rangle\leq 0$ imply that $v_n\to v$ strongly in $X$.
\end{definition}

\noindent The definition of pseudomonotonicity of multifunctions is not a simple generalization of single valued case.

\begin{definition} (see \cite{DMP2003Appl}, Chapter 1.3)
A multifunction $A:X\to 2^{X^*}$ is pseudomonotone, if
\begin{itemize}
\item[(i)] $A$ has values which are nonempty, weakly compact and convex,
\item[(ii)] $A$ is usc from every finite dimentional subspace of $X$ into $X^*$ furnished with weak topology,
\item[(iii)] if $v_n\to v$ weakly in $X$ and $v_n^*\in A(v_n)$ is such that $\limsup_{n\to\infty}\langle v_n^*, v_n-v\rangle\leq 0$ then for every
$y\in X$ there exists $u(y)\in A(v)$ such that $\langle u(y), v-y\rangle \leq \liminf_{n\to\infty}\langle v_n^*, v_n-y\rangle$.
\end{itemize}
\end{definition}

\noindent Note that it is useful to check pseudomonotonicity of multifunctions via following sufficient condition (see Proposition 1.3.66 in \cite{DMP2003Appl}
or Proposition 3.1 in \cite{Carl2006}).

\begin{prop}\label{prop:pseudo}A multifunction $A:X\to 2^{X^*}$ is pseudomonotone, if it satisfies the following conditions
\begin{itemize}
\item[(i)] $A$ has values which are nonempty, weakly compact and convex,
\item[(ii)] $A$ is bounded,
\item[(iii)] if $v_n\to v$ weakly in $X$ and $v_n^*\to v^*$ weakly in $X^*$ with $v_n^*\in A(v_n)$ and if $\limsup_{n\to\infty}\langle v_n^*, v_n-v\rangle\leq 0$ then
$v^*\in A(v)$ and $\langle v_n,v_n^*\rangle \to \langle v,v^*\rangle$.
\end{itemize}
\end{prop}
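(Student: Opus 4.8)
The plan is to verify, under hypotheses (i)--(iii), the three defining conditions of a pseudomonotone multifunction for $A$. Condition (i) of the definition is literally hypothesis (i), so nothing is required there, and the work reduces to establishing upper semicontinuity from every finite dimensional subspace of $X$ into $X^*$ with the weak topology, and the sequential inequality (iii) of the definition.

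For the upper semicontinuity, fix a finite dimensional subspace $F\subset X$. Since $X$ is reflexive, so is $X^*$, and bounded subsets of $X^*$ are relatively weakly sequentially compact by the Eberlein--\v{S}mulian theorem; combined with hypothesis (ii) this lets me replace the topological definition of usc by a sequential one on the metric space $F$. First I would check that the graph of $A$ is weakly sequentially closed: if $v_n\to v$ strongly in $F$ and $v_n^*\in A(v_n)$ with $v_n^*\to v^*$ weakly in $X^*$, then $\|v_n^*\|_{X^*}$ is bounded by (ii) while $v_n-v\to 0$ strongly, so $\langle v_n^*, v_n-v\rangle\to 0$ and in particular $\limsup_{n\to\infty}\langle v_n^*,v_n-v\rangle\leq 0$; hypothesis (iii) then yields $v^*\in A(v)$. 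The usc claim now follows by a standard contradiction argument: were $A|_F$ not usc at some $v_0$, there would exist a weakly open $O\supseteq A(v_0)$ and $v_n\to v_0$ in $F$ with $v_n^*\in A(v_n)\setminus O$; boundedness gives a weakly convergent subsequence $v_n^*\to v^*$, the weakly closed complement of $O$ forces $v^*\notin O$, yet the graph closedness gives $v^*\in A(v_0)\subseteq O$, a contradiction.

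For condition (iii) of the definition, suppose $v_n\to v$ weakly in $X$, $v_n^*\in A(v_n)$, and $\limsup_{n\to\infty}\langle v_n^*,v_n-v\rangle\leq 0$, and fix $y\in X$. The weak convergence of $v_n$ makes $\{v_n\}$ bounded, so $\{v_n^*\}$ is bounded by (ii). I would first pass to a subsequence along which $\langle v_n^*,v_n-y\rangle$ converges to $\liminf_{n\to\infty}\langle v_n^*,v_n-y\rangle$, and then, using reflexivity of $X^*$, extract a further subsequence with $v_n^*\to v^*$ weakly. Along this doubly extracted subsequence $\limsup\langle v_n^*,v_n-v\rangle\leq 0$ still holds, so hypothesis (iii) provides $v^*\in A(v)$ together with $\langle v_n^*,v_n\rangle\to\langle v^*,v\rangle$. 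Writing $\langle v_n^*,v_n-y\rangle=\langle v_n^*,v_n\rangle-\langle v_n^*,y\rangle$ and using the weak convergence of $v_n^*$ in the second term, the left side converges to $\langle v^*,v-y\rangle$; since the subsequence was chosen to realize the liminf, this gives $\langle v^*,v-y\rangle=\liminf_{n\to\infty}\langle v_n^*,v_n-y\rangle$. Taking $u(y)=v^*\in A(v)$ yields the desired inequality (indeed an equality).

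I expect the sequential reduction of the upper semicontinuity requirement to be the main technical point, since usc is a genuinely topological notion and the weak topology on $X^*$ is not metrizable; the force of hypothesis (ii) together with the reflexivity of $X^*$ is precisely what makes the passage to sequences legitimate. By contrast, condition (iii) of the definition is essentially bookkeeping with nested subsequences once hypothesis (iii) is available, the only care being to select the subsequence realizing the $\liminf$ before extracting the weakly convergent one.
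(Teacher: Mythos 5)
Your proof is correct. The paper itself gives no proof of this proposition --- it only cites Proposition 1.3.66 of \cite{DMP2003Appl} and Proposition 3.1 of \cite{Carl2006} --- and your argument (the sequential contradiction argument for upper semicontinuity on finite-dimensional subspaces, legitimized by hypothesis (ii) together with reflexivity of $X^*$, plus the double subsequence extraction realizing the $\liminf$ for condition (iii) of the definition) is essentially the standard proof found in those references.
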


\noindent Next, let $q\geq 1$. We recall that $BV^{q}(0,T;X)$ is the space of functions on the time interval $[0,T]$ with values in the Banach space $X$ such that the seminorm
$\|x\|_{BV^{q}(0,T;X)}=\sup_{\pi\in\C{F}}\sum_{\sigma_i\in \pi}\|x(b_i)-x(a_i)\|_X^{q}$ is finite, where $\C{F}$ is the family of all partitions of $[0,T]$ into a finite number of disjoint subintervals $\sigma_i=(a_i,b_i)$. Moreover, for Banach spaces $X$ and $Z$ such that $X\subset Z$ and $1\leq p,\ q<\infty$, we define a Banach space
$M^{p,q}(0,T;X,Z)=L^p(0,T;X)\cap BV^{q}(0,T;Z)$. For this space the analogue of Lions-Aubin Compactness lemma holds (see Proposition 2 in \cite{Kalita2012}).

\begin{prop}\label{prop:embedding}
If $1\leq p,\ q<\infty$ and $X_1,X_2,X_3$ are Banach spaces such that embedding $X_1\subset X_2$ is compact and $X_2\subset X_3$ is continuous, then a family
of functions which is bounded in $M^{p,q}(0,T;X_1,X_3)$ is relatively compact in $L^p(0,T;X_2)$.
\end{prop}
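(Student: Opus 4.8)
The plan is to verify the two hypotheses of a Lions--Aubin--Simon type compactness criterion for vector-valued functions (J. Simon's characterization of relatively compact subsets of $L^p(0,T;X_2)$): a family $\C{F}\subset L^p(0,T;X_2)$ is relatively compact provided, first, that the set of integrals $\{\int_{t_1}^{t_2}x(t)\,dt: x\in\C{F}\}$ is relatively compact in $X_2$ for all $0\le t_1<t_2\le T$, and second, that the time translates satisfy $\sup_{x\in\C{F}}\|x(\cdot+h)-x(\cdot)\|_{L^p(0,T-h;X_2)}\to 0$ as $h\to 0^+$. Throughout let $\C{F}$ be bounded in $M^{p,q}(0,T;X_1,X_3)$, say $\|x\|_{L^p(0,T;X_1)}\le M$ and $\|x\|_{BV^q(0,T;X_3)}\le B$ for every $x\in\C{F}$. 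The main tool is Ehrling's interpolation inequality: since $X_1\subset X_2$ is compact and $X_2\subset X_3$ is continuous, for every $\eta>0$ there is $C_\eta>0$ with $\|v\|_{X_2}\le\eta\|v\|_{X_1}+C_\eta\|v\|_{X_3}$ for all $v\in X_1$.

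The first hypothesis is immediate: for $0\le t_1<t_2\le T$, Hölder's inequality gives $\| \int_{t_1}^{t_2} x\,dt \|_{X_1}\le (t_2-t_1)^{1-1/p}M$, so the integrals form a bounded subset of $X_1$, which is relatively compact in $X_2$ by compactness of the embedding $X_1\subset X_2$. For the second hypothesis I would apply Ehrling's inequality pointwise in time to the increment $x(t+h)-x(t)$ and integrate, using $\|x(\cdot+h)-x(\cdot)\|_{L^p(0,T-h;X_1)}\le 2M$, to obtain for every $\eta>0$
\[\|x(\cdot+h)-x(\cdot)\|_{L^p(0,T-h;X_2)}\le 2\eta M+C_\eta\,\|x(\cdot+h)-x(\cdot)\|_{L^p(0,T-h;X_3)}.\]
Choosing $\eta$ small absorbs the first term uniformly in $x$ and $h$, so everything reduces to showing that the $X_3$-translates tend to $0$ uniformly over $\C{F}$. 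This is the crux, and the only place where the $BV^q$ structure replaces the derivative bound of the classical Lions--Aubin lemma.

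To estimate the $X_3$-translates, fix $h>0$, write each $t\in[0,T-h]$ uniquely as $t=s+kh$ with $s\in[0,h)$ and $k$ a nonnegative integer, and apply Fubini's theorem:
\[\int_0^{T-h}\|x(t+h)-x(t)\|_{X_3}^p\,dt=\int_0^h\sum_k\|x(s+(k+1)h)-x(s+kh)\|_{X_3}^p\,ds.\]
For each fixed $s$ the points $s+kh$ delimit consecutive disjoint subintervals of $[0,T]$, so with $a_k=\|x(s+(k+1)h)-x(s+kh)\|_{X_3}$ one has $\sum_k a_k^q\le B^q$, and the number of indices is $N_h\le T/h+1$. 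If $p\ge q$ then $a_k^p\le B^{p-q}a_k^q$, whence $\sum_k a_k^p\le B^p$; if $p<q$, Hölder's inequality with exponents $q/p$ and $q/(q-p)$ gives $\sum_k a_k^p\le B^p N_h^{1-p/q}$. Integrating over $s\in[0,h]$ yields, respectively, $\|x(\cdot+h)-x(\cdot)\|_{L^p(0,T-h;X_3)}^p\le hB^p$ and $\le C B^p h^{p/q}$, both tending to $0$ as $h\to 0^+$ uniformly over $\C{F}$. Combining this with the Ehrling step and an $\varepsilon/2$ argument establishes the second hypothesis, and the compactness criterion then delivers relative compactness in $L^p(0,T;X_2)$.

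The main obstacle is exactly this last estimate: converting the partition-based $q$-variation seminorm into a quantitative, $h$-dependent control of the $L^p$ modulus of continuity in time. The decomposition along the arithmetic grid $\{s+kh\}$ is what reconciles the $BV^q$ seminorm with the translation integral, and the split into $p\ge q$ and $p<q$ is forced by the mismatch between the $L^p$ power in the integral and the $\ell^q$ summability of the increments, which must be absorbed either through the uniform bound on a single increment or through Hölder's inequality over the $N_h\sim T/h$ subintervals.
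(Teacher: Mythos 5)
Your proof is correct, and it is essentially the argument this paper relies on: the paper does not prove Proposition \ref{prop:embedding} itself but refers to Proposition 2 of \cite{Kalita2012}, where the result is obtained from Simon's compactness criterion \cite{Simon1987} combined with precisely this kind of uniform translate estimate in $X_3$ extracted from the $BV^q$ seminorm by decomposing $[0,T-h]$ along the grid $\{s+kh\}$ (with the $p\ge q$ versus $p<q$ distinction handled as you do). The only cosmetic difference is that you build the criterion yourself from Simon's basic characterization (relative compactness of the integrals plus smallness of translates) together with Ehrling's lemma, whereas one may instead invoke the packaged version (Theorem 5 of \cite{Simon1987}), in which the Ehrling step — reducing translates in $X_2$ to boundedness in $X_1$ and translates in $X_3$ — is already built in.
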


\noindent The problem under consideration is the following:
\medskip

\noindent \textbf{Problem} $\mathbf{(\C{P})}$. Find $u\in \C{V}$ with $u'\in \C{V}^*$ such that
\begin{eqnarray}
&& u'(t)+A(t,u(t))+\iota^* F(t, \iota u(t)) \ni f(t)\quad \mbox{for a.e.}\ t\in(0,T),\\
&& u(0)=u_0.\nonumber
\end{eqnarray}
\medskip
The problem data are assumed to satisfy the following conditions.

\begin{itemize}
\item[$H(A)$]: $A:(0,T)\times V\to V^*$ is the mapping such that
\begin{itemize}
\item[$(i)$] $A(\cdot,v)$ is measurable for all $v\in V$,
\item[$(ii)$] $A(t,\cdot)$ is pseudomonotone for almost all $t\in (0,T)$,
\item[$(iii)$] $A(t,\cdot)$ is coercive for almost all $t\in (0,T)$, i.e., $\langle A(t,v),v\rangle \geq \alpha \|v\|^p - \beta \|v\|_H^2$ for all $v\in V$ and a.e $t\in (0,T)$ with $\alpha>0$ and $\beta\geq 0$,
\item[$(iv)$] $A$ satisfies the growth condition $\|A(t,v)\|_{V^*}\leq a(\|v\|_H)(1+\|v\|^{p-1})$ for all $v\in V$ and a.e $t\in (0,T)$ with $a: [0,\infty)\to [0,\infty)$ nondecreasing.
\end{itemize}
\end{itemize}

\noindent The Nemytskii operator for $A$, denoted by $\C{A}:\C{V}\to \C{V}^*$, is defined as $(\C{A}u)(t)=A(t,u(t))$ for a.e. $t\in(0,T)$ and $u\in\C{V}$.

\begin{itemize}
\item[$H(F)$]: $F:(0,T)\times U\to 2^{U^*}$ is the multifuction such that
\begin{itemize}
\item[$(i)$] $F(\cdot,u)$ is measurable for all $u\in U$,
\item[$(ii)$] the set $F(t,u)$ is nonempty, closed and convex in $U^*$ for all $u\in U$ and a.e. $t\in (0,T)$,
\item[$(iii)$] the mapping $F(t,\cdot)$ is upper semicontinuous from the strong topology of $U$ into weak topology of $U^*$ for a.e. $t\in (0,T)$,
\item[$(iv)$] at least one one of the following conditions holds
\begin{itemize}
\item[$A)$] there exists a linear and continuous mapping $p:H\to U$ such that for $v\in V$ we have $p(i(v))=\iota(v)$ and $F$ satisfies the growth condition
$\|\xi\|_{U^*}\leq c_1+d_1\|u\|_U$ for all $\xi\in F(t,u)$, all $u\in U$ and a.e. $t\in (0,T)$ with $d_1\geq 0$ and $c_1\geq 0$,
\item[$B)$] for all $u\in U$ we have $\inf_{\xi\in F(t,u)}\langle \xi,u \rangle_{U^*\times U}\geq g(t)-\lambda\|u\|_U^p$, where $0<\lambda<\frac{\alpha}{\|\iota\|_{\C{L}(V;U)}^p}$ and $g\in L^1(0,T)$, and $F$ satisfies the growth condition $\|\xi\|_{U^*}\leq c_2+d_2\|u\|^{p-1}_U$ for all $\xi\in F(t,u)$, all $u\in U$ and a.e. $t\in (0,T)$ with $d_2\geq 0$ and $c_2\geq 0$.
\end{itemize}
\end{itemize}

\item[$H_0$]: $f\in \C{V}^*$, $u_0\in H$.
\end{itemize}

\begin{rem} In both hypotheses $A)$ and $B)$ of $H(F)(iv)$ multifunction $F$ satisfies the growth condition $\|\xi\|\leq c_3+d_3\|u\|^{\max\{1,p-1\}}$ for all $\xi\in F(t,u)$, all $u\in U$ and a.e. $t\in (0,T)$ with $d_3\geq 0$ and $c_3\geq 0$. We will use the notation $r=\max\{1,p-1\}$. Also note, that in \cite{Emmrich2009}, the more general case $f=f^1+f^2$ is considered, in which $f^1\in \C{V}^*$ and $f^2\in \C{H}$. It remains an open problem whether the argument of this paper can be applied to such case.\end{rem}

\noindent We also need the following auxiliary condition concerning the space $U$ and the mapping $\iota$.

\begin{itemize}
\item[$H(U)$]: the mapping $\iota$ is linear, continuous and compact, and the Nemytskii mapping $\hat{\iota}:M^{p,q}(0,T;V,V^*)\to \C{U}$ defined by $(\hat{\iota}u)(t)=\iota(u(t))$ is also compact.
\end{itemize}

\noindent Problem $\mathbf{(\C{P})}$ will be approximated by means of a semidiscrete $\theta$-scheme on a time grid of a variable time step length.
To this end, let us define the sequence of grids indexed by $n\in \mathbb{N}$:
$$
\C{T}_n=\{0=t^0_n<t^1_n <\ldots < t^{N_n}_n=T\}
$$
Moreover we define $\tau^k_n=t^k_n-t^{k-1}_n$ for $k\in \{1,\ldots,N_n\}$. We introduce the notation $\tau^{max}_n = \max_{k=1,\ldots,N_n}\tau^k_n$ and $\tau^{min}_n = \min_{k=1,\ldots,N_n}\tau^k_n$. We need the following assumption on the time grid.

\begin{itemize}
\item[$H(t)$]: the sequence of time grids satisfies \begin{itemize}
\item[$(i)$] $\lim_{n\to\infty}\tau^{max}_n = 0$,
\item[$(ii)$] there exists the constant $K>0$ such that $\tau^{max}_n\leq K \tau^{min}_n$ for all $n\in \mathbb{N}$.
\end{itemize}
\end{itemize}

\noindent We remark that the sequence of time grids that satisfies $H(t)(ii)$ is called regular (see \cite{Carstensen1999}). For brevity of notation for a reflexive Banach space $X$ and $s\geq 1$ we introduce the operator $$\pi^{s,X}_n: L^s(0,T;X)\to L^s(0,T;X),$$ defined as $$(\pi^{s,X}_n(v))(t)= \frac{1}{\tau^k_n} \int_{t_n^{k-1}}^{t_n^k} v(t)\ dt \ \ \mbox{for}\ \  t\in (t^{k-1}_n,t^k_n].$$ Lemma 3.3 in \cite{Carstensen1999} states that if $H(t)$ holds, then $\pi^{s,X}_n v \to v$ strongly in $L^s(0,T;X)$ as $n\to\infty$ (in fact only $H(t)(i)$ is needed).

\noindent The initial condition $u_0$ will be approximated by means of a sequence of elements of $V$, namely, we need the sequence $\{u_{0n}\}_{n=1}^{\infty}$ such that $u_{0n}\in V$ and $u_{0n}\to u_0$ strongly in $H$ as $n\to \infty$.

\noindent The semidiscrete scheme consists in the recursive solving of of the approximate problem. Obtained solutions correspond
to the values in the points of the time mesh. In order to formulate the approximate problem, we need the to define the auxiliary quantities
$$
A^k_n:V\to V^*,\  f^k_n\in V^*,\  F^k_n:U\to 2^{U^*}\ \mbox{for}\ k\in 1,\ldots,N_n
$$
by
\begin{eqnarray}
&&A^k_n(u)=\frac{1}{\tau^k_n}\int_{t^{k-1}_n}^{t^k_n} A(t,u)\ dt,\  f^k_n=\frac{1}{\tau^k_n}\int_{t^{k-1}_n}^{t^k_n}f(t)\ dt,\\
&&F^k_n(u)=\left\{\frac{1}{\tau^k_n}\int_{t^{k-1}_n}^{t^k_n}\xi(t)\ dt\right\},
\end{eqnarray}
where $\xi(t)\in F(t,u)\ \mbox{for a.e.}\ t\in(t^{k-1}_n,t^k_n)$.

\noindent Now we fix the scheme parameter $\theta\in (0,1]$. We are ready to formulate the time discretized problem. Let $n\in \mathbb{N}$.
\medskip

\noindent \textbf{Problem} $\mathbf{(\C{P}^n)}$. Find $u^k_n\in V$ for $k=1,\ldots, N_n$ such that
\begin{equation}
\frac{u^k_n-u^{k-1}_n}{\tau^k_n}+A^k_n(u^{k-1+\theta}_n)+\iota^* F^k_n(\iota u^{k-1+\theta}_n)\ni f^k_n,\label{eq:theta_scheme}
\end{equation}
where $u^{k-1+\theta}_n=\theta u^k_n+(1-\theta)u^{k-1}_n$ with $u^0_n=u_{0n}$.
\medskip

\noindent The choice of parameter $\theta=1$ in Problem $\mathbf{(\C{P}^n)}$ corresponds to implicit Euler method also known as the Rothe method, while the
case $\theta=\frac{1}{2}$ corresponds to the Crank-Nicholson scheme. Note that the case $\theta=0$ which corresponds to the explicit Euler scheme is excluded since in such case there is no guarantee
that the solution $u^k_n$ of (\ref{eq:theta_scheme}), which can then be explicitly calculated, belongs to the space $V$.

\section{Auxiliary results on pseudomonotonicity}\label{sec:pseudo}

This section is devoted to results on pseudomonotonicity and $(S)_+$ property of auxiliary operators that appear in problems under consideration.

\begin{lemma} \label{lemma:pseudomon}Under assumption $H(A)$, the operators $A^k_n$ are pseudomonotone (in the sense of multifunctions) for all $n\in\mathbb{N}$ and $k\in\{1,\ldots,N_n\}$.
\end{lemma}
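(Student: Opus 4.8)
The plan is to verify the three sufficient conditions of Proposition \ref{prop:pseudo} for the single-valued operator $A^k_n$, regarded as a multifunction with singleton values. Conditions (i) and (ii) are immediate: the values $\{A^k_n(u)\}$ are trivially nonempty, weakly compact and convex, while boundedness follows by integrating the growth condition $H(A)(iv)$, since $\|A^k_n(u)\|_{V^*}\leq\frac{1}{\tau^k_n}\int_{t^{k-1}_n}^{t^k_n}\|A(t,u)\|_{V^*}\,dt\leq a(\|u\|_H)(1+\|u\|^{p-1})$. The whole difficulty lies in condition (iii).

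So let $v_m\to v$ weakly in $V$ with $A^k_n(v_m)\to w$ weakly in $V^*$ and $\limsup_{m\to\infty}\langle A^k_n(v_m),v_m-v\rangle\leq 0$; I must show $w=A^k_n(v)$ and $\langle A^k_n(v_m),v_m\rangle\to\langle w,v\rangle$. First I would record that $\{v_m\}$ is bounded in $V$ and, by compactness of $V\subset H$, strongly convergent in $H$; hence $\|v_m\|_H$ is bounded and $H(A)(iv)$ yields a single constant $C$ with $\|A(t,v_m)\|_{V^*}\leq C$ for a.e. $t$ and all $m$, so that the functions $\psi_m(t)=\langle A(t,v_m),v_m-v\rangle$ are uniformly bounded. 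The first key observation, obtained from the pointwise pseudomonotonicity $H(A)(ii)$ by a contradiction argument (a subsequence with negative limit would violate the definition of pseudomonotonicity of $A(t,\cdot)$ tested against $y=v$), is that $\liminf_{m\to\infty}\psi_m(t)\geq 0$ for a.e. $t$.

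From here I would combine this pointwise bound with the integral hypothesis. Fatou's lemma, applied after shifting $\psi_m$ by its uniform lower bound, gives $0\leq\int_{t^{k-1}_n}^{t^k_n}\liminf_m\psi_m\leq\liminf_m\int_{t^{k-1}_n}^{t^k_n}\psi_m=\tau^k_n\liminf_m\langle A^k_n(v_m),v_m-v\rangle$, which together with the $\limsup$ hypothesis forces $\langle A^k_n(v_m),v_m-v\rangle\to 0$ and $\liminf_m\psi_m(t)=0$ a.e. The obstacle is that this controls only the $\liminf$, so $\psi_m$ need not converge pointwise along the full sequence. To remedy this I would pass to the negative and positive parts: since $\liminf_m\psi_m\geq 0$ a.e., the negative part $\psi_m^-$ tends to $0$ a.e. and, being dominated, also in $L^1$; as $\int\psi_m\to 0$ this forces $\int\psi_m^+\to 0$, and extracting a subsequence I obtain $\psi_{m_j}\to 0$ a.e.

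Along this subsequence the pointwise condition $\limsup_j\psi_{m_j}(t)\leq 0$ holds for a.e. $t$, so the single-valued pseudomonotonicity of $A(t,\cdot)$ applies and yields $A(t,v_{m_j})\to A(t,v)$ weakly in $V^*$ for a.e. $t$ (the standard argument: test the pseudomonotonicity inequality against $v\pm z$ and use $\psi_{m_j}(t)\to 0$ to upgrade the $\liminf$ to a genuine weak limit). Since the integrands $\langle A(t,v_{m_j}),z\rangle$ are uniformly bounded, dominated convergence gives $\langle A^k_n(v_{m_j}),z\rangle\to\langle A^k_n(v),z\rangle$ for every $z\in V$, so $A^k_n(v_{m_j})\to A^k_n(v)$ weakly; comparing with the assumed limit $w$ identifies $w=A^k_n(v)$. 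Finally $\langle A^k_n(v_m),v_m\rangle=\langle A^k_n(v_m),v_m-v\rangle+\langle A^k_n(v_m),v\rangle\to 0+\langle w,v\rangle$, which completes condition (iii). I expect the subsequence extraction that turns the integral bound into pointwise convergence of $\psi_m$ to be the main technical obstacle, since it is precisely the passage from the $\liminf$-level information to the full pseudomonotonicity test that cannot be done directly along the original sequence.
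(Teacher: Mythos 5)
Your proposal is correct and follows essentially the same Berkovits--Mustonen-type argument as the paper: the pointwise bound $\liminf_m\psi_m(t)\geq 0$ a.e.\ by contradiction from the pseudomonotonicity of $A(t,\cdot)$, Fatou's lemma to force $\langle A^k_n v_m,v_m-v\rangle\to 0$, the positive/negative part decomposition to upgrade this to $L^1$ and then a.e.\ convergence of $\psi_m$ along a subsequence, and finally the pointwise weak convergence $A(t,v_m)\to A(t,v)$ reassembled by integration. The only cosmetic differences are that you dominate $\psi_m$ by a uniform constant (growth condition plus $V$-boundedness of the weakly convergent sequence) where the paper uses a coercivity/Young lower bound, and that you package the conclusion as condition $(iii)$ of Proposition \ref{prop:pseudo} rather than the single-valued pseudomonotonicity inequality; neither affects the substance of the argument.
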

\begin{proof} First we fix the indices $n\in \mathbb{N}$ and $k\in\{1,\ldots,N_n\}$. Observe that for $v\in V$, we have
$$
\|A^k_nv\|_{V^*}\leq \frac{1}{\tau^k_n}\int_{t^{k-1}_n}^{t^k_n}\|A(t,v)\|_{V^*}\, dt\leq
$$
$$\leq \frac{1}{\tau^k_n}\int_{t^{k-1}_n}^{t^k_n}a(\|v\|_H)(1+\|v\|^{p-1})\, dt
\leq a(\|v\|_H)(1+\|v\|^{p-1}).
$$
This estimate shows that the operator $A^k_n$ is bounded. Now, let us take $v_m\to v$ weakly in $V$, as $m\to\infty$ with
$\limsup_{m\to\infty} \langle A^k_n v_m, v_m-v\rangle \leq 0$. We proceed by a standard argument of \cite{Berkovits1996}. Obviously $v_m\to v$ strongly in $H$ and $\|v_m\|_H\leq R$ for all $m\in\mathbb{N}$ with $R>0$. Define $\xi_m(t)=\langle A(t,v_m),v_m-v \rangle$ and $C=\{t\in (t^{k-1}_n,t^k_n): \liminf_{m\to\infty} \xi_m(t)<0 \}$. The latter is the Lebesgue measurable subset of $(t^{k-1}_n,t^k_n)$. Assume that $m(C)>0$ and pick $t\in C$. For a subsequence which is still denoted by $m$, we have $\lim_{m\to\infty}\langle A(t,v_m), v_m-v\rangle < 0$. From $H(A)(iii)$ we have $0\leq\liminf_{m\to\infty}\langle A(t,v_m),v_m-v\rangle$ which is a contradiction. This means that we have $\liminf_{m\to\infty} \xi_m(t)\geq 0$ for a.e. $t\in (t^{k-1}_n,t^k_n)$. Now, we have the estimate
$$
\xi_m(t)\geq \alpha \|v_m\|^p - \beta \|v_m\|_H^2 - a(R) \|v\|-a(R)\|v\|\|v_m\|^{p-1}.
$$
Next, the Young inequality with $\epsilon$ gives $$a(R)\|v\|\|v_m\|^{p-1}\leq \epsilon \|v_m\|^p+C(\epsilon)(a(R))^p\|v\|^p,$$ which, by taking $\epsilon=\frac{\alpha}{2}$, leads to
$$
\xi_m(t)\geq \frac{\alpha}{2} \|v_m\|^p - \beta \|v_m\|_H^2 - a(R) \|v\|-C\left(\frac{\alpha}{2}\right)(a(R))^p\|v\|^p .
$$
We are now in position to use the Fatou lemma to obtain
$$
\beta\|v\|_H^2\leq \frac{1}{\tau^k_n}\int_{t^{k-1}_n}^{t^k_n} \liminf_{n\to\infty} \xi_m(t) \, dt+ \beta\|v\|_H^2\leq
$$
$$
\leq \frac{1}{\tau^k_n} \liminf_{n\to\infty}\int_{t^{k-1}_n}^{t^k_n} \xi_m(t) + \beta\|v_m\|_H^2\, dt\leq \beta \|v\|_H^2+\limsup_{n\to\infty}\frac{1}{\tau^k_n}\int_{t^{k-1}_n}^{t^k_n} \xi_m(t) \, dt=
$$
$$
=\beta \|v\|_H^2+\limsup_{n\to\infty}\langle A^k_n v_m,v_m-v\rangle\leq \beta \|v\|_H^2.
$$

\noindent Thus we have $\int_{t^{k-1}_n}^{t^k_n}\xi_m(t)\, dt \to 0$ as $m\to \infty$. Now note that $|\xi_m(t)|=\xi_m(t)+2\xi_m^-(t)$ and $\xi_m^-(t)\to 0$ for a.e. $t\in (t^{k-1}_n,t^k_n)$. Since $\xi_m(t)+\beta\|v_m\|_H^2\geq h(t)$ with $h\in L^1(t^{k-1}_n,t^k_n)$, then also $\xi_m^-(t)-\beta\|v_m\|_H^2\leq h^-(t)$ and we can invoke Fatou lemma again to get $\limsup_{m\to\infty}\int_{t^{k-1}_n}^{t^k_n}\xi_m^-(t)\, dt\leq 0$ and moreover $\int_{t^{k-1}_n}^{t^k_n}\xi_m^-(t)\, dt\to 0$, as $m\to \infty$. We deduce that $\xi_m\to 0$ in $L^1(t^{k-1}_n,t^k_n)$, and, for a subsequence, still denoted by the same index, we have $\xi_m(t)\to 0$ for a.e. $t\in (t^{k-1}_n,t^k_n)$. By the pseudomonotonicity of $A(t,\cdot)$, it follows that for a.e. $t\in (t^{k-1}_n,t^k_n)$, we have $A(t,v_m)\to A(t,v)$ weakly in $V^*$ and $\langle A(t,v_m),v_m\rangle \to \langle A(t,v),v\rangle$. Let $u\in V$. We have
$$
\langle A^k_n v, v-u\rangle = \frac{1}{\tau^k_n}\int_{t^{k-1}_n}^{t^k_n} \langle A(t,v), v-u\rangle \, dt =
$$
$$
= \frac{1}{\tau^k_n}\int_{t^{k-1}_n}^{t^k_n} \lim_{m\to\infty}\langle A(t,v_m), v_m-u\rangle \, dt =
$$
$$
= -\beta \|v\|_H^2 + \frac{1}{\tau^k_n}\int_{t^{k-1}_n}^{t^k_n} \lim_{m\to\infty}(\langle A(t,v_m), v_m-u\rangle +\beta\|v_m\|_H^2)\, dt.
$$
Invoking Fatou lemma one last time, we get
$$
\langle A^k_n v, v-u\rangle \leq \liminf_{m\to\infty}\frac{1}{\tau^k_n}\int_{t^{k-1}_n}^{t^k_n} \langle A(t,v_m), v_m-v+v-u\rangle \, dt=
$$
$$
= \liminf_{m\to\infty}\frac{1}{\tau^k_n}\int_{t^{k-1}_n}^{t^k_n} \langle A(t,v_m), v-u\rangle \, dt=\liminf_{m\to\infty}\langle A^k_n v_m,v-u\rangle.
$$
The assertion of the lemma is proved.
\end{proof}

\begin{lemma}\label{lemma:pseudo:nemytskii} Under assumption $H(A)$, the Nemytskii operator $\C{A}:\C{V}\to\C{V}^*$ corresponding to $A$ is $M^{p,q}(0,T;V,V^*)$-pseudomonotone.
\end{lemma}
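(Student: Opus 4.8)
The plan is to reduce the statement to a pointwise-in-time application of the single-time pseudomonotonicity already exploited in Lemma \ref{lemma:pseudomon}, the passage between pointwise and integral statements being supplied by the compactness of Proposition \ref{prop:embedding}. Fix $w\in\C{V}$ and a sequence $u_n\to u$ weakly in $\C{V}$ that is bounded in $M^{p,q}(0,T;V,V^*)$ and satisfies $\limsup_{n\to\infty}\langle\C{A}u_n,u_n-u\rangle\le 0$; I must show $\langle\C{A}u,u-w\rangle\le\liminf_{n\to\infty}\langle\C{A}u_n,u_n-w\rangle$. First I would pass to a subsequence realizing this lower limit, and then, since $\{u_n\}$ is bounded in $M^{p,q}(0,T;V,V^*)$ and $V\subset H\subset V^*$ with the first embedding compact, apply Proposition \ref{prop:embedding} to extract a further subsequence with $u_n\to u$ strongly in $L^p(0,T;H)$. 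This yields $u_n(t)\to u(t)$ in $H$ for a.e.\ $t$ together with an $L^p(0,T)$ envelope $\|u_n(t)\|_H\le h(t)$.

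Write $\xi_n(t)=\langle A(t,u_n(t)),u_n(t)-u(t)\rangle$. The next step is to establish $\liminf_{n\to\infty}\xi_n(t)\ge 0$ for a.e.\ $t$, by repeating the contradiction argument of Lemma \ref{lemma:pseudomon}: were the lower limit negative on a set of positive measure, then at such a $t$ coercivity $H(A)(iii)$ and the growth bound $H(A)(iv)$ (with $a(\|u_n(t)\|_H)\le a(h(t))$, $a$ nondecreasing) force $\{u_n(t)\}$ to be bounded in $V$; the strong $H$-convergence then identifies its weak $V$-limit as $u(t)$, and pointwise pseudomonotonicity of $A(t,\cdot)$ gives $\liminf_{n\to\infty}\xi_n(t)\ge 0$, a contradiction. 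Combining this pointwise bound with $\limsup_{n\to\infty}\int_0^T\xi_n\,dt\le 0$ through Fatou's lemma yields $\int_0^T\liminf_{n\to\infty}\xi_n\,dt=0$, whence $\liminf_{n\to\infty}\xi_n(t)=0$ a.e.\ and $\int_0^T\xi_n\,dt\to 0$; controlling $\xi_n^-$ as in Lemma \ref{lemma:pseudomon} upgrades this to $\xi_n\to 0$ in $L^1(0,T)$ and, along a subsequence, $\xi_n(t)\to 0$ a.e.

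With $\xi_n(t)\to 0$ a.e., the coercivity bound gives $\{u_n(t)\}$ bounded in $V$ for a.e.\ $t$, so $u_n(t)\to u(t)$ weakly in $V$ a.e., and pseudomonotonicity of $A(t,\cdot)$ now delivers the two pointwise limits used in Lemma \ref{lemma:pseudomon}: $A(t,u_n(t))\to A(t,u(t))$ weakly in $V^*$ and $\langle A(t,u_n(t)),u_n(t)\rangle\to\langle A(t,u(t)),u(t)\rangle$ for a.e.\ $t$. Consequently the integrand $\langle A(t,u_n(t)),u_n(t)-w(t)\rangle$ converges a.e.\ to $\langle A(t,u(t)),u(t)-w(t)\rangle$, and a final application of Fatou's lemma to $\langle A(t,u_n(t)),u_n(t)-w(t)\rangle+\beta\|u_n(t)\|_H^2$ gives $\langle\C{A}u,u-w\rangle\le\liminf_{n\to\infty}\langle\C{A}u_n,u_n-w\rangle$, which is the assertion.

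I expect the main obstacle to be the two Fatou arguments, that is, producing an $n$-independent integrable lower bound for $\xi_n$ and for the final integrand. The indefinite coercivity term $-\beta\|u_n(t)\|_H^2$ is removed by adding $\beta\|u_n(t)\|_H^2$ (exactly as in Lemma \ref{lemma:pseudomon}), after which the remaining terms are dominated, using $H(A)(iv)$ and the monotonicity of $a$, by a function of the $L^p$ envelope $h$, of $u$ and of $w$ only. This is precisely the point where the strong $L^p(0,T;H)$ convergence furnished by Proposition \ref{prop:embedding} is indispensable, since it is what replaces the uniform $H$-bound $R$ available in Lemma \ref{lemma:pseudomon}; when $a$ grows rapidly one first restricts to the sets $\{t:h(t)\le M\}$, on which the bounds are uniform and Lemma \ref{lemma:pseudomon} applies directly, and then exhausts $(0,T)$ by letting $M\to\infty$.
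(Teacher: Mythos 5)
Your overall strategy is precisely the one the paper intends: the paper omits the proof, saying only that it follows Lemma 1 of \cite{Kalita2012} and the argument of Lemma \ref{lemma:pseudomon}, and that argument is exactly what you reconstruct — use Proposition \ref{prop:embedding} to get $u_n\to u$ strongly in $L^p(0,T;H)$, hence a.e.\ in $H$ with an $L^p$ envelope $h$, run the pointwise contradiction argument (correctly supplemented by the coercivity step giving boundedness of $\{u_n(t)\}$ in $V$, which is not needed in Lemma \ref{lemma:pseudomon} but is needed here), upgrade via Fatou to $\xi_n\to 0$ in $L^1$ and a.e.\ along a subsequence, and finish with pointwise pseudomonotonicity of $A(t,\cdot)$ and one more Fatou application.

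The gap is exactly where you suspected it, and your proposed repair does not close it. The truncation--exhaustion argument fails for the following reason: on $E_M=\{t:h(t)\le M\}$ you indeed have an $n$-independent integrable minorant, and Fatou yields $\liminf_n\int_{E_M}\xi_n\,dt\ge 0$; but to run the $L^1$-argument of Lemma \ref{lemma:pseudomon} on $E_M$ you need $\int_{E_M}\xi_n\,dt\to 0$, i.e.\ also $\limsup_n\int_{E_M}\xi_n\,dt\le 0$. The hypothesis only gives $\limsup_n\int_0^T\xi_n\,dt\le 0$, and to localize it you would need $\liminf_n\int_{F_M}\xi_n\,dt\ge 0$ (or at least $\ge-\varepsilon_M$) on the complement $F_M=\{h>M\}$ — but on $F_M$ the factor $a(\|u_n(t)\|_H)$ is uncontrolled, there is no uniform integrable minorant, and Fatou is unavailable there; nothing prevents, say, $\int_{F_M}\xi_n\,dt\to -c<0$ with $\int_{E_M}\xi_n\,dt\to c$. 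The same lack of control means that for rapidly growing $a$ it is not even clear that $\{\C{A}u_n\}$ is bounded in $\C{V}^*$, so the pairings in the hypothesis are not uniformly controlled. The resolution used (implicitly) by the paper is not truncation but an extra bound: every sequence to which the lemma is applied is also bounded in $L^\infty(0,T;H)$ (Lemma \ref{lemma:bounded}, Theorem \ref{thm:main}; compare the proof of (\ref{eqn:estimate_ak}) in Lemma \ref{lemma:estimates2}, where the same device handles the factor $a(\cdot)$). With $\sup_n\|u_n\|_{L^\infty(0,T;H)}\le R$ one has $a(\|u_n(t)\|_H)\le a(R)$, your two minorants become integrable on all of $(0,T)$, and your proof then works verbatim with no truncation; this bound also justifies $\int_0^T\|u_n\|_H^2\,dt\to\int_0^T\|u\|_H^2\,dt$, which your cancellation of the added term $\beta\|u_n\|_H^2$ silently requires and which does not follow from the $L^p$ envelope alone when $p<2$. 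So you should either add $L^\infty(0,T;H)$-boundedness of the sequence to the hypotheses (this is how the result is actually used in this paper) or produce a genuinely new argument for fast-growing $a$; the exhaustion as written is not one.
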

\begin{proof}
The proof is omitted since it exactly follows the lines of the proof of Lemma 1 in \cite{Kalita2012}, where the autonomous case was considered. The main lines of the argument follow that of Lemma \ref{lemma:pseudomon}. Compare also
proof of Theorem 2(b) in \cite{Berkovits1996} and Lemma 8.8 in \cite{Roubicek2005}.
\end{proof}

\begin{lemma} If, in addition to hypothesis $H(A)$, the operators $A(t,\cdot)$ are of type $(S)_+$ for a.e. $t\in (0,T)$, then the Nemytskii operator $\C{A}$ is of type $M^{p,q}(0,T;V,V^*)$-$(S)_+$. \label{lemma:splus}
\end{lemma}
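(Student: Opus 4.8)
The plan is to run the argument of Lemma \ref{lemma:pseudomon} at the level of the Nemytskii operator $\C{A}$, replacing the final appeal to pointwise pseudomonotonicity of $A(t,\cdot)$ by its pointwise $(S)_+$ property, and then to upgrade the resulting pointwise strong convergence in $V$ to strong convergence in $\C{V}$. Accordingly, let $u_n\to u$ weakly in $\C{V}$ with $\{u_n\}$ bounded in $M^{p,q}(0,T;V,V^*)$ and $\limsup_{n\to\infty}\langle \C{A}u_n,u_n-u\rangle\le 0$; the goal is $u_n\to u$ strongly in $\C{V}$.

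First I would extract compactness. Applying Proposition \ref{prop:embedding} with $X_1=V$, $X_2=H$, $X_3=V^*$ (the embedding $V\subset H$ is compact and $H\subset V^*$ is continuous), the bounded sequence $\{u_n\}\subset M^{p,q}(0,T;V,V^*)$ is relatively compact in $L^p(0,T;H)$; since its weak limit in $\C{V}$ is $u$, it follows that $u_n\to u$ strongly in $L^p(0,T;H)$. Passing to a subsequence (not relabelled), I obtain $u_n(t)\to u(t)$ strongly in $H$ for a.e.\ $t$ together with a dominating function $h\in L^p(0,T)$ such that $\|u_n(t)\|_H\le h(t)$ a.e.

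Next, setting $\xi_n(t)=\langle A(t,u_n(t)),u_n(t)-u(t)\rangle$, I would establish the pointwise estimate $\liminf_{n\to\infty}\xi_n(t)\ge 0$ for a.e.\ $t$. The delicate point, which does not presuppose pointwise weak convergence in $V$, is this: if on a set of positive measure $\liminf_n\xi_n(t)<0$, pick such a $t$ and a subsequence with $\xi_n(t)\to c<0$; by $H(A)(iii)$--$(iv)$ and the Young inequality (exactly the display of Lemma \ref{lemma:pseudomon}, with $R$ replaced by $h(t)$) the quantity $\|u_n(t)\|$ stays bounded along that subsequence, so a further subsequence converges weakly in $V$, and its weak limit must equal $u(t)$ because $u_n(t)\to u(t)$ strongly in $H$. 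Pseudomonotonicity of $A(t,\cdot)$ (hypothesis $H(A)(ii)$), applied with test element $y=u(t)$, then gives $0=\langle A(t,u(t)),u(t)-u(t)\rangle\le\liminf_n\xi_n(t)=c<0$, a contradiction. With this in hand, the coercivity minorant $\xi_n(t)+\beta\|u_n(t)\|_H^2\ge -\,a(h(t))\|u(t)\|-C\,a(h(t))^p\|u(t)\|^p$, the hypothesis $\limsup_n\int_0^T\xi_n\le 0$, and Fatou's lemma (applied after the routine localization to $\{h\le M\}$, where the growth coefficient is bounded so the minorants lie in $L^1$, followed by $M\to\infty$) yield $\int_0^T\xi_n\,dt\to 0$; the same estimate applied to the negative parts $\xi_n^-$ gives $\int_0^T\xi_n^-\,dt\to 0$. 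Hence $\xi_n\to 0$ in $L^1(0,T)$ and, along a further subsequence, $\xi_n(t)\to 0$ for a.e.\ $t$. This part is verbatim the template of Lemma \ref{lemma:pseudomon}, and I expect the only genuine obstacle to be the weak-limit identification just described, which is precisely where the compactness furnished by Proposition \ref{prop:embedding} is indispensable.

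Finally I would invoke the $(S)_+$ property pointwise and globalize. For a.e.\ $t$, the convergence $\xi_n(t)\to 0$ together with the coercivity bound forces $\{u_n(t)\}$ to be bounded in $V$, so, as above, $u_n(t)\to u(t)$ weakly in $V$; since moreover $\limsup_n\langle A(t,u_n(t)),u_n(t)-u(t)\rangle=\lim_n\xi_n(t)=0\le 0$, the assumed $(S)_+$ property of $A(t,\cdot)$ gives $u_n(t)\to u(t)$ strongly in $V$ for a.e.\ $t$. To pass from pointwise to $\C{V}$-convergence, I would use the coercivity bound $\tfrac{\alpha}{2}\|u_n(t)\|^p\le \xi_n(t)+\beta\|u_n(t)\|_H^2+c(t)$ (again $H(A)(iii)$--$(iv)$ with Young): the $L^1$-convergence of $\{\xi_n\}$ (hence their uniform integrability) together with the $H$-domination shows that $\{\|u_n(\cdot)\|^p\}$ is uniformly integrable on $(0,T)$, and combined with the a.e.\ convergence $\|u_n(t)-u(t)\|\to 0$, Vitali's convergence theorem gives $\int_0^T\|u_n(t)-u(t)\|^p\,dt\to 0$, i.e.\ $u_n\to u$ strongly in $\C{V}$. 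Since the weak limit $u$ is independent of the chosen subsequences, every subsequence of the original sequence admits a further subsequence along which this reasoning applies and produces strong convergence to the same $u$; by the Urysohn subsequence principle the full sequence converges strongly in $\C{V}$, which proves that $\C{A}$ is of type $M^{p,q}(0,T;V,V^*)$-$(S)_+$.
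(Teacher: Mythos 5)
Your overall strategy is exactly the intended one. The paper itself omits this proof, referring to Lemma 2 of \cite{Kalita2012JMAA} and Theorem 2(c) of \cite{Berkovits1996}, and those arguments have precisely your structure: compactness of $M^{p,q}(0,T;V,V^*)$-bounded sets in $L^p(0,T;H)$ via Proposition \ref{prop:embedding}, a pointwise inequality $\liminf_{n}\xi_n(t)\ge 0$ obtained by contradiction from pseudomonotonicity of $A(t,\cdot)$, a Fatou-type argument yielding $\xi_n\to 0$ in $L^1(0,T)$ and a.e.\ along a subsequence, and finally the pointwise $(S)_+$ property followed by an integration (Vitali) step and the subsequence principle. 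Your identification of the pointwise weak limit in $V$ (coercivity gives boundedness of $\|u_n(t)\|$, and the limit is pinned down by the strong convergence $u_n(t)\to u(t)$ in $H$) is correct and is indeed where Proposition \ref{prop:embedding} does its work.

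There is, however, a genuine gap in your Fatou step, and it propagates to the Vitali step. Your minorant is $\xi_n(t)\ge -\beta\|u_n(t)\|_H^2-a(h(t))\|u(t)\|-C\,a(h(t))^p\|u(t)\|^p$, where $h\in L^p(0,T)$ dominates $\|u_n(\cdot)\|_H$. Since $a$ is an arbitrary nondecreasing function, $a(h(\cdot))$ need not be integrable against $\|u(\cdot)\|$ or $\|u(\cdot)\|^p$ (and $h^2$ need not be in $L^1$ when $p<2$), so the minorant can fail to be in $L^1(0,T)$. Localizing to $E_M=\{h\le M\}$ does make it integrable there, and Fatou then gives $\liminf_n\int_{E_M}\xi_n\,dt\ge 0$; but the hypothesis $\limsup_n\int_0^T\xi_n\,dt\le 0$ is global, and to conclude $\int_0^T\xi_n\,dt\to 0$ you must in addition bound $\int_{E_M^c}\xi_n\,dt$ from below, uniformly in $n$, by a quantity vanishing as $M\to\infty$. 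On $E_M^c$ you have no integrable minorant --- that is exactly why you localized --- so ``followed by $M\to\infty$'' does not close the argument; the needed ingredient is uniform integrability of $\xi_n^-$ on $(0,T)$, which is precisely what is in question. The gap disappears if the sequence $\{u_n\}$ is in addition bounded in $L^\infty(0,T;H)$: then $a(\|u_n(t)\|_H)\le a(R)$ uniformly, the minorant lies honestly in $L^1(0,T)$, and Fatou and Vitali apply on all of $(0,T)$ with no localization. This is the setting in which the cited proofs effectively operate (boundedness in the space $W$ of functions with $u'\in\C{V}^*$ yields a $C([0,T];H)$ bound), and it is also the only situation in which the paper uses the lemma, since the interpolants $\bar{u}_n$ are bounded in $L^\infty(0,T;H)$ by Lemma \ref{lemma:bounded}. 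So you should either incorporate that boundedness into your hypotheses (noting it is available in every application), or supply a genuinely new argument for the set where $\|u_n(t)\|_H$ is large; as written, the proof is incomplete in the stated generality.
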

\begin{proof}
The proof is omitted since it follows the lines of the proof of Lemma 2 in \cite{Kalita2012JMAA}, where the autonomous case was considered. Compare also
the proof of Theorem 2(c) in \cite{Berkovits1996}.
\end{proof}

\begin{lemma}
Under assumption $H(F)$, the multifunctions $\iota^* F^k_n(\iota \cdot): V\to 2^{V^*}$ are pseudomonotone for all $n\in\mathbb{N}$ and $k\in \{1,\ldots,N_n\}$.
\end{lemma}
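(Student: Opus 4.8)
The plan is to verify the three sufficient conditions of Proposition \ref{prop:pseudo} for the multifunction $G^k_n := \iota^* F^k_n(\iota\,\cdot\,) : V \to 2^{V^*}$, exploiting the compactness of $\iota$ to handle the nonmonotone term. First I would establish condition (i), that the values are nonempty, weakly compact and convex. Since $F(t,u)$ is nonempty, closed and convex by $H(F)(ii)$, the integral average defining $F^k_n(\iota v)$ is a singleton by construction (the average of a fixed measurable selection), so $G^k_n(v) = \iota^* F^k_n(\iota v)$ is a single point in $V^*$; in particular its value set is trivially nonempty, convex and weakly compact. Next, for condition (ii), boundedness follows directly from the growth condition in the Remark after $H(F)(iv)$: using $\|\xi\|_{U^*} \le c_3 + d_3\|u\|_U^r$ together with continuity of $\iota$ and $\iota^*$, I would estimate $\|G^k_n(v)\|_{V^*} \le \|\iota^*\|\,(c_3 + d_3 \|\iota\|^r \|v\|^r)$, which shows $G^k_n$ maps bounded sets to bounded sets.

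The substance lies in condition (iii). I would take $v_m \to v$ weakly in $V$ together with $v_m^* \to v^*$ weakly in $V^*$, where $v_m^* \in G^k_n(v_m)$, and assume $\limsup_{m\to\infty}\langle v_m^*, v_m - v\rangle \le 0$; I must then show $v^* \in G^k_n(v)$ and $\langle v_m^*, v_m\rangle \to \langle v^*, v\rangle$. The key mechanism is the compactness of $\iota : V \to U$ from $H(U)$: weak convergence $v_m \to v$ in $V$ forces $\iota v_m \to \iota v$ strongly in $U$. Writing $v_m^* = \iota^* \eta_m$ with $\eta_m = \tfrac{1}{\tau^k_n}\int_{t^{k-1}_n}^{t^k_n}\xi_m(t)\,dt$ and $\xi_m(t) \in F(t, \iota v_m)$, the growth bound gives $\{\eta_m\}$ bounded in $U^*$, so (passing to a subsequence) $\eta_m \to \eta$ weakly in $U^*$ and $v^* = \iota^* \eta$. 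The convergence $\langle v_m^*, v_m\rangle \to \langle v^*, v\rangle$ is then immediate: $\langle \iota^* \eta_m, v_m\rangle = \langle \eta_m, \iota v_m\rangle_{U^* \times U}$ is a pairing of a weakly convergent sequence against a strongly convergent one, hence converges to $\langle \eta, \iota v\rangle_{U^* \times U} = \langle v^*, v\rangle$ (this in fact makes the $\limsup$ hypothesis automatically zero).

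The remaining and genuinely multivalued point is to prove the membership $\eta \in F^k_n(\iota v)$, i.e. that the weak limit of the time-averaged selections is a time-averaged selection over the limit point. For this I would pass to the pointwise level: since $\iota v_m \to \iota v$ strongly in $U$, the upper semicontinuity of $F(t,\cdot)$ from strong $U$ to weak $U^*$ in $H(F)(iii)$, combined with the convexity and weak closedness of the values, lets me control weak limits of $\xi_m(t) \in F(t,\iota v_m)$. I expect the main obstacle to be the interchange of the weak limit with the time integral defining $F^k_n$: I would argue via a Mazur-type / convexity argument (or invoke that the set of selections $\{\xi : \xi(t) \in F(t,w) \text{ a.e.}\}$ is convex and weakly closed, so its image under the averaging operator is too), using $H(F)(iii)$ to conclude that every weak cluster point of the averages lies in $\frac{1}{\tau^k_n}\int_{t^{k-1}_n}^{t^k_n} F(t,\iota v)\,dt = F^k_n(\iota v)$. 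Applying $\iota^*$ then yields $v^* \in G^k_n(v)$, and Proposition \ref{prop:pseudo} gives pseudomonotonicity of $G^k_n$.
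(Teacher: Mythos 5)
Your verification of condition (i) of Proposition \ref{prop:pseudo} rests on a misreading of the definition of $F^k_n$, and this is a genuine gap. The set $F^k_n(u)=\bigl\{\frac{1}{\tau^k_n}\int_{t^{k-1}_n}^{t^k_n}\xi(t)\,dt\bigr\}$ ranges over \emph{all} integrable selections $\xi$ with $\xi(t)\in F(t,u)$ a.e.; it is not the average of one fixed selection, and in general it is genuinely multivalued (e.g.\ if $F(t,u)$ is a fixed nondegenerate interval, the averages of its selections fill out the whole interval). So $\iota^*F^k_n(\iota v)$ is not a single point, and "trivially nonempty, convex and weakly compact" does not stand: nonemptiness needs a measurable-selection argument (from $H(F)(i)$, $(ii)$ and the growth condition), convexity follows from convexity of the values of $F$, and weak compactness requires boundedness \emph{plus closedness}. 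The closedness is the real work: the paper takes $\eta_m=\iota^*\xi_m\to\eta$ with $\xi_m$ averages of selections $\zeta_m(t)\in F(t,\iota v)$, extracts a weak limit $\zeta$ of $\zeta_m$ in $L^q(t^{k-1}_n,t^k_n;U^*)$, and invokes the convergence theorem of Aubin and Cellina to conclude $\zeta(t)\in F(t,\iota v)$ a.e.; this step is of exactly the same nature as your condition (iii) and cannot be skipped. Note also that your singleton claim contradicts your own (correct) treatment of condition (iii), where you range over selections $\xi_m(t)\in F(t,\iota v_m)$.

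For conditions (ii) and (iii) your outline coincides with the paper's proof: boundedness from the growth condition, compactness of $\iota$ giving $\iota v_m\to\iota v$ strongly in $U$, lifting $v_m^*$ to selections, weak $L^q$ compactness of $\{\xi_m\}$, and the weak--strong pairing giving $\langle v_m^*,v_m\rangle\to\langle v^*,v\rangle$. Where you leave a "Mazur-type" placeholder for the membership $\eta\in F^k_n(\iota v)$, the paper cites precisely the Aubin--Cellina convergence theorem (Proposition 2 in \cite{Migorski2009}): weak convergence of the selections, strong convergence of the arguments, and $H(F)(ii)$--$(iii)$ yield $\xi(t)\in F(t,\iota v)$ a.e., after which one averages. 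Be aware that your parenthetical fallback --- that the set of selections of $F(\cdot,w)$ for \emph{fixed} $w$ is convex and weakly closed --- is not sufficient by itself, because your $\xi_m$ are selections of $F(\cdot,\iota v_m)$ with varying argument; the upper semicontinuity $H(F)(iii)$ is indispensable to transfer to $F(\cdot,\iota v)$, and that transfer is what the Aubin--Cellina theorem (or a full Mazur-plus-separation argument) provides. Finally, after extracting the subsequence with $\eta_m\to\eta$ weakly in $U^*$, you should note that $v^*=\iota^*\eta$ forces the convergence to hold for the whole sequence by uniqueness of the limit, as the paper does.
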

\begin{proof}
\noindent  We fix the indices $n\in \mathbb{N}$ and $k\in \{1,\ldots,N_n\}$. First we show boundedness. For $v\in V$ and $\eta\in \iota^* F^k_n(\iota v)$, we have $\eta=\iota^* \xi$ with $\xi\in F^k_n(\iota v)$, i.e. $\xi=\frac{1}{\tau^k_n}\int_{t^{k-1}_n}^{t^k_n}\zeta(t)\ dt$ with $\zeta (t)\in F(t, \iota v)$, and
\begin{equation}\label{eqn:estgk}
\|\eta\|_{V^*}\leq \|\iota\|_{\C{L}(V,U)}\|\xi\|_{U^{*}}\leq \|\iota\|_{\C{L}(V,U)}(c_3+d_3\|\iota\|_{\C{L}(V,U)}^r\|v\|^r).
\end{equation}
Obviously, the set $\iota^* F^k_n(\iota v)$ is nonempty. Its convexity follows from the fact that $F(t,v)$ is convex a.e. $t$. To show closedness, assume that $\eta_m\in \iota^* F^k_n(\iota v)$ and $\eta_m\to \eta$ strongly in $V^*$. We have $\eta_m=\iota^* \xi_m$ with $\xi_m\in F^k_n(\iota v)$, i.e. $\xi_m=\frac{1}{\tau^k_n}\int_{t^{k-1}_n}^{t^k_n}\zeta_m(t)\ dt$ with $\zeta_m(t)\in F(t, \iota v)$. From $H(F)(iv)$, we have $\|\zeta_m(t)\|_{U^*}\leq c_3+d_3\|\iota\|_{\C{L}(V,U)}^r\|v\|^r$, from which it follows that, for
a subseqence, $\zeta_m\to \zeta$ weakly in $L^q(t^{k-1}_n,t^k_n;U^*)$, and weakly in $L^1(t^{k-1}_n,t^k_n;U^*)$. We can invoke the convergence theorem of
Aubin and Cellina (see \cite{Aubin1984} and Proposition 2 in \cite{Migorski2009}) and we obtain that $\zeta(t)\in F(t,\iota v)$ for a.e. $t$.
Moreover, for $w\in U$ we have
$$
\langle\xi_m,w\rangle_{U^*\times U}=\frac{1}{\tau^k_n}\int_{t^{k-1}_n}^{t^k_n}\langle \zeta_m(t), w\rangle_{U^*\times U}\ dt\to \frac{1}{\tau^k_n}\int_{t^{k-1}_n}^{t^k_n}\langle \zeta(t), w\rangle_{U^*\times U}\ dt=
$$
$$
=\left\langle\frac{1}{\tau^k_n}\int_{t^{k-1}_n}^{t^k_n} \zeta(t)\ dt, w\right\rangle_{U^*\times U},
$$
 which means that $\xi_m\to\xi$ weakly in $U^*$, where $\xi\in F^k_n(\iota v)$. Hence $\eta\in \iota^* F^k_n(\iota v)$.
It remains to show the condition $(iii)$ of Proposition \ref{prop:pseudo}. Assume that $v_m\to v$ weakly in $V$ and $\eta_m\in \iota^* F^k_n(\iota v_m)$ with $\eta_m\to \eta$ weakly in $V^*$. By compactness of $\iota$, we have $\iota v_m\to \iota v$ strongly in $U$. We proceed similarily as in the proof of closedness. We have $\eta_m=\iota^* \xi_m$ with  $\xi_m=\frac{1}{\tau^k_n}\int_{t^{k-1}_n}^{t^k_n}\zeta_m(t)\ dt$ and $\zeta_m(t)\in F(t, \iota v_m)$. By the growth condition $H(F)(iv)$ we deduce that $\|\zeta_m(t)\|_{U^*}$ is bounded by a constant independent on $m$ and $t$ and we can extract from $\zeta_m$ a subsequence if necessary that converges weakly in $L^q(t^{k-1}_n,t^k_n;U^*)$ to some $\zeta$. We can invoke convergence theorem of Aubin and Cellina again to get $\zeta(t)\in F(t, \iota v)$ for a.e. $t$. If we put $\xi=\frac{1}{\tau^k_n}\int_{t^{k-1}_n}^{t^k_n} \zeta(t)\ dt$, then $\xi_m\to \xi$ weakly in $U^*$ and $\eta_m\to \iota^*\xi$ weakly in $V^*$. Hence $\iota^*\xi=\eta$ and the convergence holds for the whole sequence. We have shown that $\eta\in \iota^* F^k_n(\iota v)$. Finally he have $\langle \eta_m,v_m\rangle = \langle \xi_m, \iota v_m\rangle_{U^*\times U} \to \langle \xi, \iota v\rangle_{U^*\times U}=\langle \eta,v\rangle$ and the proof is complete.
\end{proof}

\section{Convergence of semidiscrete scheme}\label{sec:main}

In this section, first we formulate the result which guarantees the existence of solutions to semidiscrete problems and then we proceed
with a priori estimates and passing to the limit which is shown to solve the time continuous problem.

\begin{lemma}
Let $n\in\mathbb{N}$ and $k\in\{1,\ldots,N_n\}$ be given. Under assumptions $H(A)$, $H(F)$, $H_0$ and $H(U)$, there exists $\tau_0>0$ such that if $0< \tau^k_n < \tau_0$ then there exists $u^k_n\in V$ solution to Problem $\mathbf{(\C{P}^n)}$.
\end{lemma}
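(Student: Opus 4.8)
The plan is to recast Problem $\mathbf{(\C{P}^n)}$, at the fixed step $k$, as a surjectivity statement for a single pseudomonotone coercive multifunction on $V$, and then invoke the standard surjectivity theorem for such operators. Write $w:=u^{k-1}_n\in V$ (known from the previous step, with $w=u_{0n}$ when $k=1$) and introduce the new unknown $z:=\theta u^k_n+(1-\theta)w=u^{k-1+\theta}_n$. Because $\theta\in(0,1]$ is strictly positive the substitution is invertible, $u^k_n=\theta^{-1}(z-(1-\theta)w)$, and a direct computation gives $u^k_n-w=\theta^{-1}(z-w)$. Hence the inclusion (\ref{eq:theta_scheme}) is equivalent to finding $z\in V$ with
$$
\frac{1}{\theta\tau^k_n}z+A^k_n(z)+\iota^* F^k_n(\iota z)\ni f^k_n+\frac{1}{\theta\tau^k_n}w=:b\in V^*,
$$
where $z$ and $w$ are viewed in $V^*$ through the embeddings $V\subset H\subset V^*$, so that $\langle z,v\rangle=(z,v)$ for $v\in V$. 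It therefore suffices to show that the multifunction $\C{G}:V\to 2^{V^*}$ defined by $\C{G}(z)=\frac{1}{\theta\tau^k_n}z+A^k_n(z)+\iota^* F^k_n(\iota z)$ is surjective once $\tau^k_n$ is small enough.

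Next I would verify that $\C{G}$ fits the hypotheses of the surjectivity theorem for bounded, coercive, pseudomonotone multifunctions with nonempty, closed and convex values (see \cite{DMP2003Appl}). The linear term $z\mapsto\frac{1}{\theta\tau^k_n}z$ is bounded, single-valued, monotone and continuous from $V$ into $V^*$ (indeed $\langle z,z\rangle=\|z\|_H^2\geq 0$), hence pseudomonotone; $A^k_n$ is pseudomonotone by Lemma \ref{lemma:pseudomon}; and $\iota^* F^k_n(\iota\cdot)$ is pseudomonotone by the last lemma of Section \ref{sec:pseudo}. Since a finite sum of pseudomonotone multifunctions is again pseudomonotone, and each value $\C{G}(z)$ is the sum of a point and the nonempty closed convex set $\iota^* F^k_n(\iota z)$, the operator $\C{G}$ is pseudomonotone with nonempty closed convex values; its boundedness follows from the growth estimates already established, namely the bound on $\|A^k_n v\|_{V^*}$ in Lemma \ref{lemma:pseudomon} together with (\ref{eqn:estgk}).

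The \emph{crux}, and the step I expect to be the main obstacle, is coercivity, and this is precisely where the restriction $\tau^k_n<\tau_0$ enters. For $z^*\in\C{G}(z)$ write $z^*=\frac{1}{\theta\tau^k_n}z+A^k_n(z)+\iota^*\xi$ with $\xi\in F^k_n(\iota z)$; using $H(A)(iii)$ I would obtain $\langle z^*,z\rangle\geq\frac{1}{\theta\tau^k_n}\|z\|_H^2+\alpha\|z\|^p-\beta\|z\|_H^2+\langle\xi,\iota z\rangle_{U^*\times U}$, and then bound the last term from below separately in the two alternatives of $H(F)(iv)$. Under alternative $B)$ one has $\langle\xi,\iota z\rangle\geq\frac{1}{\tau^k_n}\int_{t^{k-1}_n}^{t^k_n}g\,dt-\lambda\|\iota z\|_U^p\geq\mathrm{const}-\lambda\|\iota\|_{\C{L}(V;U)}^p\|z\|^p$, and since $\lambda\|\iota\|_{\C{L}(V;U)}^p<\alpha$ the leading coefficient $\alpha-\lambda\|\iota\|_{\C{L}(V;U)}^p$ of $\|z\|^p$ stays positive; it then suffices to take $\tau_0$ so small that $\frac{1}{\theta\tau^k_n}\geq\beta$, rendering the $\|z\|_H^2$ coefficient nonnegative. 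Under alternative $A)$ I would instead use the factorization $\iota=p\circ i$ to get $\|\iota z\|_U\leq\|p\|_{\C{L}(H;U)}\|z\|_H$, so that $\langle\xi,\iota z\rangle\geq-(c_1+d_1\|p\|_{\C{L}(H;U)}\|z\|_H)\|p\|_{\C{L}(H;U)}\|z\|_H$ is controlled by $\|z\|_H^2$ (rather than $\|z\|^2$), and is absorbed together with $-\beta\|z\|_H^2$ by choosing $\tau_0$ so small that $\frac{1}{\theta\tau^k_n}\geq\beta+d_1\|p\|_{\C{L}(H;U)}^2$; the surviving positive term is then $\alpha\|z\|^p$ against a linear remainder. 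In either case $\inf_{z^*\in\C{G}(z)}\langle z^*,z\rangle\geq c\|z\|^p-C$ with $c>0$ and $p>1$, whence $\C{G}$ is coercive, and the threshold $\tau_0$ depends only on the fixed data ($\theta$, $\beta$, $d_1$, $\|p\|_{\C{L}(H;U)}$, $\|\iota\|_{\C{L}(V;U)}$, $\alpha$, $\lambda$) and not on $n$ or $k$, as the statement requires. Applying the surjectivity theorem then yields $b\in\C{G}(V)$, i.e. a solution $z\in V$, and $u^k_n:=\theta^{-1}(z-(1-\theta)w)\in V$ solves Problem $\mathbf{(\C{P}^n)}$.
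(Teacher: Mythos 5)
Your proposal is correct and follows essentially the same route as the paper: the same substitution $z=u^{k-1+\theta}_n$, the same operator $\frac{1}{\theta\tau^k_n}i^*i+A^k_n+\iota^*F^k_n(\iota\,\cdot)$, pseudomonotonicity via the sum rule together with the two lemmas of Section \ref{sec:pseudo}, the Br\'{e}zis-type surjectivity theorem from \cite{DMP2003Appl}, and the identical case split $A)$/$B)$ of $H(F)(iv)$ for coercivity with the same smallness thresholds on $\tau^k_n$. The only cosmetic difference is that in case $A)$ you absorb the linear cross term $c_1\|p\|_{\C{L}(H;U)}\|z\|_H$ into the lower-order remainder dominated by $\alpha\|z\|^p$, whereas the paper splits it by a Young-type inequality with $\varepsilon$; both are valid.
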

\begin{proof}
We rewrite equivalently (\ref{eq:theta_scheme}) as follows
$$
\frac{1}{\theta \tau^k_n} u^{k-1+\theta}_n + A^k_n u^{k-1+\theta}_n + \iota^* F^k_n(\iota u^{k-1+\theta}_n) \ni \frac{1}{\theta \tau^k_n}u^{k-1}_n+f^k_n.
$$
We show that, given $u^{k-1}_n\in V$, there exists $u^{k-1+\theta}_n$ that satisfies the above inclusion. We prove that the range of multifunction $V\ni v\to Lv = \frac{i^*iv}{\theta \tau^k_n}  + A^k_n v + \iota^* F^k_n(\iota v)$ is the whole space
$V^*$. This will be done by a surjectivity theorem of Br\'{e}zis (see, for instance, Theorem 1.3.70 in \cite{DMP2003Appl}). We need to show that $L$ is coercive (in the sense that $\lim_{\|v\|\to\infty}\inf_{v^*\in Lv}\frac{\langle v^*,v\rangle}{\|v\|}=\infty$) and pseudomonotone. Since the operator $\frac{i^*i}{\theta \tau^k_n}$ satisfies conditions $(i)$-$(iii)$ of Proposition \ref{prop:pseudo} trivially, and we already know that $A^k_n$ and
$v \to \iota^* F^k_n(\iota v)$ are pseudomonotone, the pseudomonotonicity of $L$ follows from  the fact that sum of pseudomonotone multifunctions is pseudomonotone, cf. \cite{DMP2003Appl} Proposition 1.3.68. In order to show the coercivity of $L$ we need to assume that $v^*\in Lv$ and estimate $\langle v^*,v\rangle$ from below. We have, with some $\eta\in F^k_n(\iota v)$
\begin{eqnarray}
&&\langle v^*,v\rangle \geq \frac{\|v\|_H^2}{\theta \tau^k_n}+\frac{1}{\tau^k_n}\int_{t^{k-1}_n}^{t^k_n}(\alpha\|v\|^p-\beta\|v\|_H^2)\ dt+\langle \eta,\iota v\rangle_{U^*\times U}\geq\nonumber\\
&&\geq\|v\|_H^2\left(\frac{1}{\theta \tau^k_n}-\beta\right)+\alpha\|v\|^p+\frac{1}{\tau^k_n}\int_{t^{k-1}_n}^{t^k_n}\langle \zeta(t),\iota v\rangle_{U^*\times U}\ dt,\nonumber
\end{eqnarray}
where $\zeta(t)\in F(t,\iota v)$ for a.e. $t\in (t^{k-1}_n,t^k_n)$. We proceed with the proof separately for the cases $A)$ and $B)$ of $H(F)(iv)$. In the case $A)$, we have
\begin{eqnarray}
&&\langle \zeta(t),\iota v\rangle_{U^*\times U} \geq - (c_1+d_1\|\iota v\|_U)\|p\|_{\C{L}(H;U)}\|v\|_H \geq\nonumber\\
&&\geq  - c_1\|p\|_{\C{L}(H;U)}\|v\|_H - d_1 \|p\|_{\C{L}(H;U)}^2\|v\|_H^2\geq\nonumber\\
&&\geq - \frac{4c_1^2}{d_1\varepsilon}- (d_1+\varepsilon) \|p\|_{\C{L}(H;U)}^2\|v\|_H^2,\nonumber
\end{eqnarray}
where $\varepsilon>0$ is arbitrary. We obtain
$$
\langle v^*,v\rangle \geq \|v\|_H^2\left(\frac{1}{\theta \tau^k_n}-\beta-d_1\|p\|_{\C{L}(H;U)}^2-\varepsilon\|p\|_{\C{L}(H;U)}^2\right)+\alpha\|v\|^p - \frac{4c_1^2}{d_1\varepsilon}.
$$
Obviously, if $\tau^k_n<\frac{1}{\theta(\beta + d_1\|p\|_{\C{L}(H;U)}^2)}$, then it is possible to choose $\varepsilon$ such that the term with $\|v\|_H^2$ is nonnegative and we obtain the coercivity. In the case $B)$, by integrating the inequality in $H(F)(iv) B)$, we get
\begin{equation}\label{eqn:hfivb}
\frac{1}{\tau^k_n}\int_{t^{k-1}_n}^{t^k_n}\langle \zeta(t),\iota v\rangle_{U^*\times U}\ dt \geq - \frac{\|g\|_{L^1(t^{k-1}_n,t^k_n)}}{\tau^k_n}-\lambda\|\iota\|_{\C{L}(V;U)}^p\|v\|^p.
\end{equation}
Next, we obtain
$$
\langle v^*,v\rangle \geq\|v\|_H^2\left(\frac{1}{\theta \tau^k_n}-\beta\right)+\left(\alpha-\lambda\|\iota\|_{\C{L}(V;U)}^p\right)\|v\|^p-\frac{\|g\|_{L^1(t^{k-1}_n,t^k_n)}}{\tau^k_n}.
$$
Now, if $\tau^k_n\leq \frac{1}{\theta \beta}$, then the term with $\|v\|_H^2$ is nonegative and we obtain coercivity. This concludes the proof of the Lemma.
\end{proof}

\begin{remark}Note that if $\theta=1$, then the assumption that $u^{k-1}_n\in V$ is not needed. The solution $u^k_n$ exists if $u^{k-1}_n\in H$.
\end{remark}

\noindent Next result establishes estimates which are satisfied by the solutions of the semidiscrete problem.

\begin{lemma}\label{lemma:estimates}
Let $n\in\mathbb{N}$ be fixed. Under assumptions $H(A), H(F), H(U)$ and $H_0$ there exists $\tau_0>0$ such that if $\tau^{max}_n< \tau_0$, then following estimate holds with a constant $M$
which depends only on the problem data
$$
\max_{k=1,\ldots,N}\|u^k_n\|_H^2+\sum_{k=1}^{N_n}\tau^k_n\|u^{k-1+\theta}_n\|^p + (2\theta-1)\sum_{k=1}^{N_n}\|u^k_n-u^{k-1}_n\|_H^2\leq M.
$$
\end{lemma}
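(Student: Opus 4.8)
The plan is to test the discrete inclusion (\ref{eq:theta_scheme}) with the natural weighted element $u^{k-1+\theta}_n$ and to exploit the telescoping structure that the $\theta$-weighting produces in the $H$-norm. First I would fix $n$ and pick, for each $k$, a representative $\xi^k_n\in F^k_n(\iota u^{k-1+\theta}_n)$, i.e. $\xi^k_n=\frac{1}{\tau^k_n}\int_{t^{k-1}_n}^{t^k_n}\zeta(t)\,dt$ with $\zeta(t)\in F(t,\iota u^{k-1+\theta}_n)$, so that (\ref{eq:theta_scheme}) holds as the equality $\frac{u^k_n-u^{k-1}_n}{\tau^k_n}+A^k_n(u^{k-1+\theta}_n)+\iota^*\xi^k_n=f^k_n$. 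Pairing this identity with $u^{k-1+\theta}_n\in V$, using the $H$-scalar product on the difference quotient and the $V^*\times V$ duality on the remaining terms, the decisive point is the algebraic identity
\[
\left(\frac{u^k_n-u^{k-1}_n}{\tau^k_n},\,u^{k-1+\theta}_n\right)
=\frac{1}{\tau^k_n}\left(\frac12\|u^k_n\|_H^2-\frac12\|u^{k-1}_n\|_H^2+\frac{2\theta-1}{2}\|u^k_n-u^{k-1}_n\|_H^2\right).
\]
For $\theta\in[\tfrac12,1]$ the last term is a nonnegative contribution that I keep on the left-hand side; it is exactly the jump term appearing in the claimed estimate.

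Next I would multiply the tested identity by $\tau^k_n$ and sum over $k=1,\ldots,m$ for an arbitrary $m\le N_n$, so that the first two summands telescope to $\frac12\|u^m_n\|_H^2-\frac12\|u_{0n}\|_H^2$. For the operator term I use $H(A)(iii)$ to obtain $\tau^k_n\langle A^k_n u^{k-1+\theta}_n,u^{k-1+\theta}_n\rangle=\int_{t^{k-1}_n}^{t^k_n}\langle A(t,u^{k-1+\theta}_n),u^{k-1+\theta}_n\rangle\,dt\ge\tau^k_n(\alpha\|u^{k-1+\theta}_n\|^p-\beta\|u^{k-1+\theta}_n\|_H^2)$. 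The multivalued term is handled in the two alternatives of $H(F)(iv)$. In case $B)$, integrating the sign condition gives $\tau^k_n\langle\xi^k_n,\iota u^{k-1+\theta}_n\rangle_{U^*\times U}\ge\int_{t^{k-1}_n}^{t^k_n}g(t)\,dt-\lambda\tau^k_n\|\iota\|_{\C{L}(V;U)}^p\|u^{k-1+\theta}_n\|^p$; since $\lambda\|\iota\|_{\C{L}(V;U)}^p<\alpha$, the $\|\cdot\|^p$ contributions combine into the strictly positive coefficient $\alpha-\lambda\|\iota\|_{\C{L}(V;U)}^p$, while $g\in L^1(0,T)$ only yields the finite constant $\|g\|_{L^1(0,T)}$. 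In case $A)$, using $\iota u=p(i u)$ and the linear growth I bound $|\tau^k_n\langle\xi^k_n,\iota u^{k-1+\theta}_n\rangle|\le\tau^k_n(c_1\|p\|_{\C{L}(H;U)}\|u^{k-1+\theta}_n\|_H+d_1\|p\|_{\C{L}(H;U)}^2\|u^{k-1+\theta}_n\|_H^2)$, so the coercivity term $\alpha\|u^{k-1+\theta}_n\|^p$ survives intact and only additional $\|\cdot\|_H^2$-terms are created. The force term is estimated by Young's inequality, $\tau^k_n\langle f^k_n,u^{k-1+\theta}_n\rangle\le\varepsilon\tau^k_n\|u^{k-1+\theta}_n\|^p+C(\varepsilon)\tau^k_n\|f^k_n\|_{V^*}^q$, where by Jensen's inequality $\sum_k\tau^k_n\|f^k_n\|_{V^*}^q\le\|f\|_{\C{V}^*}^q$; choosing $\varepsilon$ small absorbs a fraction of the positive $\|\cdot\|^p$ term.

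At this stage I have, for every $m$, an inequality of the form $\frac12\|u^m_n\|_H^2+\frac{2\theta-1}{2}\sum_{k=1}^m\|u^k_n-u^{k-1}_n\|_H^2+c_0\sum_{k=1}^m\tau^k_n\|u^{k-1+\theta}_n\|^p\le C+c_1\sum_{k=1}^m\tau^k_n\|u^{k-1+\theta}_n\|_H^2$ with $c_0>0$ and $C,c_1$ depending only on the data (recall $\|u_{0n}\|_H$ is bounded since $u_{0n}\to u_0$ in $H$). Using convexity, $\|u^{k-1+\theta}_n\|_H^2\le\theta\|u^k_n\|_H^2+(1-\theta)\|u^{k-1}_n\|_H^2$, the right-hand sum is rewritten in terms of $\|u^j_n\|_H^2$, $j=0,\ldots,m$, each coefficient being proportional to a single step length. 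The only obstruction to a Gronwall conclusion is the top index $j=m$, whose coefficient is $O(\tau^m_n)$; this is where the smallness enters. Choosing $\tau_0$ so that $\tau^{max}_n<\tau_0$ forces this coefficient below $\tfrac14$, it is absorbed into $\tfrac12\|u^m_n\|_H^2$ on the left, leaving $\|u^m_n\|_H^2\le C'+C''\sum_{k=0}^{m-1}\tau^{(k)}_n\|u^k_n\|_H^2$ with $\sum_k\tau^{(k)}_n\le T$, to which the discrete Gronwall lemma applies, giving $\max_k\|u^k_n\|_H^2\le M$ uniformly in $n$. Reinserting this bound controls $\sum_k\tau^k_n\|u^{k-1+\theta}_n\|^p$ (coefficient $c_0>0$) and, for $\theta\ge\tfrac12$, the jump term $(2\theta-1)\sum_k\|u^k_n-u^{k-1}_n\|_H^2$, which is the assertion. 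I expect the main obstacle to be precisely the bookkeeping of the $H$-norm terms: establishing the $\theta$-identity, tracking the sign of $2\theta-1$, and arranging the convexity estimate and the absorption so that the surviving quantity is genuinely of discrete Gronwall type with step-proportional coefficients.
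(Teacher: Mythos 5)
Your proposal is correct and takes essentially the same route as the paper's proof: testing (\ref{eq:theta_scheme}) with $u^{k-1+\theta}_n$, the same algebraic identity $(a-b)(\theta a+(1-\theta)b)=\tfrac12(a^2-b^2+(2\theta-1)(a-b)^2)$, coercivity $H(A)(iii)$, the case split $A)$/$B)$ of $H(F)(iv)$ with the same per-case bounds on the multivalued term, Young plus Jensen for the force term, the convexity estimate $\|u^{k-1+\theta}_n\|_H^2\le\theta\|u^k_n\|_H^2+(1-\theta)\|u^{k-1}_n\|_H^2$, and a discrete Gronwall argument made possible by the smallness of $\tau^{max}_n$. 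The only (cosmetic) difference is that the paper applies the discrete Gronwall-type lemma of \cite{Emmrich2009} directly to the per-step inequalities, while you sum over $k$, absorb the top-index term by hand, and then iterate, which amounts to inlining that lemma's proof.
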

\begin{proof}
The estimates are derived by testing (\ref{eq:theta_scheme}) with $u^{k-1+\theta}_n$ and using the algebraic relation valid for all $a,b\in\mathbb{R}$
$$
(a-b)(\theta a+ (1-\theta)b)=\frac{1}{2}(a^2-b^2+(2\theta-1)(a-b)^2).
$$
For $k=1,\ldots,N_n$ we have
\begin{eqnarray}
&&\frac{1}{2\tau^k_n}(\|u^k_n\|_H^2-\|u^{k-1}_n\|_H^2+(2\theta-1)\|u^k_n-u^{k-1}_n\|_H^2)+\langle A^k_n u^{k-1+\theta}_n, u^{k-1+\theta}_n\rangle +\nonumber\\
&&+ \langle\xi^k_n, \iota  u^{k-1+\theta}_n \rangle_{U^*\times U} = \frac{1}{\tau^k_n}\int_{t^{k-1}_n}^{t^k_n}\langle f(t),u^{k-1+\theta}_n\rangle\ dt,\nonumber
\end{eqnarray}
where $\xi^k_n\in F^k_n(\iota u^{k-1+\theta}_n)$. By $H(A)(iii)$  we arrive at the estimate
\begin{eqnarray}
&&\frac{1}{2\tau^k_n}(\|u^k_n\|_H^2-\|u^{k-1}_n\|_H^2+(2\theta-1)\|u^k_n-u^{k-1}_n\|_H^2)+\alpha\|u^{k-1+\theta}_n\|^p-\\
&&-\beta\|u^{k-1+\theta}_n\|^2_H + \langle\xi^k_n, \iota  u^{k-1+\theta}_n \rangle_{U^*\times U} \leq \frac{1}{\tau^k_n}\int_{t^{k-1}_n}^{t^k_n} \|f(t)\|_{V^*}\|u^{k-1+\theta}_n\|\ dt.\nonumber
\end{eqnarray}
Using the Cauchy and Young inequalities, we get
\begin{eqnarray}
&&\|u^k_n\|_H^2-\|u^{k-1}_n\|_H^2+(2\theta-1)\|u^k_n-u^{k-1}_n\|_H^2+\nonumber\\
&&+2\tau^k_n(\alpha-\varepsilon_1)\|u^{k-1+\theta}_n\|^p-2\tau^k_n\beta\|u^{k-1+\theta}_n\|^2_H +\nonumber\\
&&+ 2\tau^k_n\langle\xi^k_n, \iota  u^{k-1+\theta}_n \rangle_{U^*\times U} \leq C_1(\varepsilon_1)\|f\|^q_{L^q(t^{k-1}_n,t^k_n;V^*)},\nonumber
\end{eqnarray}
with arbitrary $\varepsilon_1>0$ and a positive constant $C_1(\varepsilon)$, independent of $n, k$ and $\tau^k_n$.
We proceed separately for the cases $A)$ and $B)$ of $H(F)(iv)$. In the first case, with arbitrary $\varepsilon_2>0$ and the constant $C_2(\varepsilon_2)$ we have
$$
|\langle\xi^k_n, \iota  u^{k-1+\theta}_n \rangle_{U^*\times U}|\leq (d_1\|p\|_{\C{L}(U;H)}^2+\varepsilon_2)\|u^{k-1+\theta}_n\|_H^2+C_2(\varepsilon_2).
$$
Next, we can take $\varepsilon_1=\frac{\alpha}{2}$ and denote $C_1(\frac{\alpha}{2})=C_3$ to get
\begin{eqnarray}
&&\|u^k_n\|_H^2-\|u^{k-1}_n\|_H^2+(2\theta-1)\|u^k_n-u^{k-1}_n\|_H^2+\nonumber\\
&&+\tau^k_n\alpha\|u^{k-1+\theta}_n\|^p\leq 2\tau^k_n(\beta+d_1\|p\|_{\C{L}(U;H)}^2+\varepsilon_2)\|u^{k-1+\theta}_n\|^2_H + \nonumber\\
&& + C_3\|f\|^q_{L^q(t^{k-1}_n,t^k_n;V^*)}+2\tau^k_n C_2(\varepsilon_2).\label{eqn:first_case}
\end{eqnarray}
In the second case by (\ref{eqn:hfivb}), we have
\begin{eqnarray}
&&\|u^k_n\|_H^2-\|u^{k-1}_n\|_H^2+(2\theta-1)\|u^k_n-u^{k-1}_n\|_H^2+\nonumber\\
&&+2\tau^k_n(\alpha-\varepsilon_1-\lambda\|\iota\|^p_{\C{L}(V;U)})\|u^{k-1+\theta}_n\|^p\leq 2\tau^k_n\beta\|u^{k-1+\theta}_n\|^2_H +\nonumber\\
&& + C_1(\varepsilon_1)\|f\|^q_{L^q(t^{k-1}_n,t^k_n;V^*)}+ 2\|g\|_{L^1(t^{k-1}_n,t^k_n)}.\nonumber
\end{eqnarray}
Let $\varepsilon_1 = \frac{1}{2}(\alpha-\lambda\|\iota\|^p_{\C{L}(V;U)})$ and $C_4=C_1(\frac{1}{2}(\alpha-\lambda\|\iota\|^p_{\C{L}(V;U)}))$. We have
\begin{eqnarray}
&&\|u^k_n\|_H^2-\|u^{k-1}_n\|_H^2+(2\theta-1)\|u^k_n-u^{k-1}_n\|_H^2+\nonumber\\
&&+\tau^k_n(\alpha-\lambda\|\iota\|^p_{\C{L}(V;U)})\|u^{k-1+\theta}_n\|^p\leq 2\tau^k_n\beta\|u^{k-1+\theta}_n\|^2_H +\nonumber\\
&& + C_4\|f\|^q_{L^q(t^{k-1}_n,t^k_n;V^*)}+ 2\|g\|_{L^1(t^{k-1}_n,t^k_n)}.\label{eqn:second_case}
\end{eqnarray}
Now, by convexity of the mapping $H\ni u\to \|u\|_H^2$, we have
$$
\|u^{k-1+\theta}_n\|_H^2\leq \theta\|u^k_n\|_H^2+(1-\theta)\|u^{k-1}_n\|_H^2,
$$
and we are in position to use the discrete Gronwall-type lemma (see Lemma 1 in \cite{Emmrich2009}).
In the case $A)$, which leads to (\ref{eqn:first_case}), it suffices to take $\tau_0$ such that $2 \tau_0\theta(\beta+d_1\|p\|_{\C{L}(U;H)}^2)<1$ and choose
$\varepsilon_2$ such that $2 \tau_0 \theta(\beta+d_1\|p\|_{\C{L}(U;H)}^2+\varepsilon_2) <1$. We get
\begin{eqnarray}
&&\max_{k=1,\ldots,N_n}\|u^k_n\|_H^2+\sum_{k=1}^{N_n}\tau^k_n\|u^{k-1+\theta}_n\|^p + (2\theta-1)\sum_{k=1}^{N_n}\|u^k_n-u^{k-1}_n\|_H^2\leq\nonumber\\
&&\leq C_5 (1+\|u_{0n}\|_H^2+\|f\|^q_{\C{V}^*}),\nonumber
\end{eqnarray}
with $C_5>0$. In the case $B)$, which leads to (\ref{eqn:second_case}), it suffices to take $\tau_0$ such that $2 \tau_0\theta\beta<1$. We get
\begin{eqnarray}
&&\max_{k=1,\ldots,N_n}\|u^k_n\|_H^2+\sum_{k=1}^{N_n}\tau^k_n\|u^{k-1+\theta}_n\|^p + (2\theta-1)\sum_{k=1}^{N_n}\|u^k_n-u^{k-1}_n\|_H^2\leq\nonumber\\
&&\leq C_6 (\|g\|_{L^1(0,T)}+\|u_{0n}\|_H^2+\|f\|^q_{\C{V}^*} ),\nonumber
\end{eqnarray}
with $C_6>0$.
\end{proof}

\begin{lemma}\label{lemma:estimates2}
Let $n\in\mathbb{N}$ be given. Under assumptions $H(A), H(F), H(U), H_0$ and $\theta\in [\frac{1}{2},1]$, there exists $\tau_0>0$ such that if $\tau^{max}_n< \tau_0$, then the following estimates holds with the constant $M$
dependent only on the problem data
\begin{equation}\label{eqn:estimate_ak}
\sum_{k=1}^{N_n}\tau^k_n\|A^k_n u^{k-1+\theta}_n\|_{V^*}^q \leq M,
\end{equation}
\begin{equation}\label{eqn:estimate_fk}
\sum_{k=1}^{N_n}\tau^k_n\|\iota^*\xi^k_n\|_{V^*}^q \leq M,
\end{equation}
\begin{equation}\label{eqn:estimate_der}
\sum_{k=1}^{N_n}\tau^k_n\left\|\frac{u^k_n-u^{k-1}_n}{\tau^k_n}\right\|_{V^*}^q \leq M,
\end{equation}
where $\xi^k_n\in F^k_n(\iota u^{k-1+\theta})$ are such that (\ref{eq:theta_scheme}) holds.
\end{lemma}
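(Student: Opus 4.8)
The plan is to derive the three bounds in order, obtaining (\ref{eqn:estimate_der}) as a consequence of (\ref{eqn:estimate_ak}), (\ref{eqn:estimate_fk}) and an elementary bound on $f^k_n$. The starting point is Lemma \ref{lemma:estimates}, which together with the boundedness of $\{u_{0n}\}$ in $H$ provides a constant $R>0$, independent of $n$ and $k$, with $\|u^k_n\|_H\leq R$ for all $k\in\{0,\ldots,N_n\}$, as well as the bound $\sum_{k=1}^{N_n}\tau^k_n\|u^{k-1+\theta}_n\|^p\leq M$. By convexity of $\|\cdot\|_H$ one also has $\|u^{k-1+\theta}_n\|_H\leq\theta\|u^k_n\|_H+(1-\theta)\|u^{k-1}_n\|_H\leq R$.

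For (\ref{eqn:estimate_ak}) I would use the growth bound $\|A^k_n v\|_{V^*}\leq a(\|v\|_H)(1+\|v\|^{p-1})$ already established in the proof of Lemma \ref{lemma:pseudomon}. Evaluating at $v=u^{k-1+\theta}_n$ and using that $a$ is nondecreasing gives $a(\|u^{k-1+\theta}_n\|_H)\leq a(R)$. Raising to the power $q$, applying $(1+x)^q\leq 2^{q-1}(1+x^q)$ and the identity $(p-1)q=p$, one gets $\|A^k_n u^{k-1+\theta}_n\|_{V^*}^q\leq 2^{q-1}a(R)^q(1+\|u^{k-1+\theta}_n\|^p)$; multiplying by $\tau^k_n$, summing, and invoking $\sum_k\tau^k_n=T$ together with the $V$-bound of Lemma \ref{lemma:estimates} closes this step.

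The estimate (\ref{eqn:estimate_fk}) is the delicate one and must be treated separately for the two alternatives in $H(F)(iv)$. Recalling the bound (\ref{eqn:estgk}), which yields $\|\iota^*\xi^k_n\|_{V^*}\leq\|\iota\|_{\C{L}(V,U)}\|\xi^k_n\|_{U^*}$ with $\|\xi^k_n\|_{U^*}$ controlled by the growth of $F$ at $\iota u^{k-1+\theta}_n$, in case $B)$ I would use $\|\xi\|_{U^*}\leq c_2+d_2\|u\|_U^{p-1}$ together with $\|\iota u^{k-1+\theta}_n\|_U\leq\|\iota\|_{\C{L}(V,U)}\|u^{k-1+\theta}_n\|$; raising to the power $q$ again produces $\|u^{k-1+\theta}_n\|^{(p-1)q}=\|u^{k-1+\theta}_n\|^p$, and the summation proceeds exactly as for $A^k_n$. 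The main obstacle is case $A)$, where the growth in $\|u\|_U$ is only linear, so that raising to the power $q$ would create the term $\|u^{k-1+\theta}_n\|^q$, which is not controlled by the $L^p$-type bound of Lemma \ref{lemma:estimates} when $p<2$ (then $q>p$). This is resolved by exploiting the mapping $p:H\to U$ supplied by case $A)$ of $H(F)(iv)$: since $\iota u^{k-1+\theta}_n=p(i(u^{k-1+\theta}_n))$, one has $\|\iota u^{k-1+\theta}_n\|_U\leq\|p\|_{\C{L}(H;U)}\|u^{k-1+\theta}_n\|_H\leq\|p\|_{\C{L}(H;U)}R$, whence $\|\xi^k_n\|_{U^*}$ and therefore $\|\iota^*\xi^k_n\|_{V^*}$ are bounded uniformly in $k$ and $n$, and the sum is dominated by $T$ times a constant.

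Finally, for (\ref{eqn:estimate_der}) I would rewrite the scheme (\ref{eq:theta_scheme}) as $\frac{u^k_n-u^{k-1}_n}{\tau^k_n}=f^k_n-A^k_n u^{k-1+\theta}_n-\iota^*\xi^k_n$, use the triangle inequality and $(a+b+c)^q\leq 3^{q-1}(a^q+b^q+c^q)$. The contributions of $A^k_n u^{k-1+\theta}_n$ and $\iota^*\xi^k_n$ are summable by (\ref{eqn:estimate_ak}) and (\ref{eqn:estimate_fk}), while for the forcing term Jensen's inequality applied to the average defining $f^k_n$ gives $\tau^k_n\|f^k_n\|_{V^*}^q\leq\int_{t^{k-1}_n}^{t^k_n}\|f(t)\|_{V^*}^q\,dt$, so that $\sum_k\tau^k_n\|f^k_n\|_{V^*}^q\leq\|f\|_{\C{V}^*}^q<\infty$ by $H_0$. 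Summing the three contributions yields (\ref{eqn:estimate_der}) and completes the argument.
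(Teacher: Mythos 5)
Your proposal is correct and follows essentially the same route as the paper: the growth condition with $(p-1)q=p$ and the uniform $H$-bound for (\ref{eqn:estimate_ak}), the case split on $H(F)(iv)$ with the mapping $p:H\to U$ resolving case $A)$ and the exponent identity resolving case $B)$ for (\ref{eqn:estimate_fk}), and the triangle plus Jensen inequality for (\ref{eqn:estimate_der}). In particular, your observation that case $A)$ cannot be handled through the $\C{V}$-bound alone when $q>p$ and must instead go through the uniform $H$-bound is precisely the point the paper's proof exploits.
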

\begin{proof}
From the Jensen inequality, we have
\begin{eqnarray}
&&\sum_{k=1}^{N_n}\tau^k_n\|A^k_n u^{k-1+\theta}_n\|_{V^*}^q  \leq \sum_{k=1}^{N_n}\tau^k_n a(\|u^{k-1+\theta}_n\|_H)^q(1+\|u^{k-1+\theta}_n\|_{V^*}^{p-1})^q\leq\nonumber \\
&&\leq 2^{q-1} \sum_{k=1}^{N_n}\tau^k_n a(\|u^{k-1+\theta}_n\|_H)^q(1+\|u^{k-1+\theta}_n\|_{V^*}^p).\label{eqn:estimate_a}
\end{eqnarray}
Observe that if $\theta\in [\frac{1}{2},1]$, then $\max_{k=1,\ldots,N_n}\|u^k_n\|_H$ is bounded from Lemma \ref{lemma:estimates}. Moreover,
$\|u_{0n}\|_H$ is bounded since this sequence approximates the initial condition in Problem $\mathbf{(\C{P})}$. Thus, we can say that
$\max_{k=0,\ldots,N_n}\|u^k_n\|_H \leq R$ with $R>0$. Hence
$$
\sum_{k=1}^{N_n}\tau^k_n\|A^k_n u^{k-1+\theta}_n\|_{V^*}^q  \leq
 2^{q-1}a(R)^q \left(T  + \sum_{k=1}^{N_n}\tau^k_n \|u^{k-1+\theta}_n\|_{V^*}^p\right).
$$
Since the last sum is bounded from the previous lemma, we obtain (\ref{eqn:estimate_ak}).

In order to establish the estimate (\ref{eqn:estimate_fk}) we observe that
\begin{equation}
\sum_{k=1}^{N_n}\tau^k_n\|\iota^*\xi^k_n\|_{V^*}^q \leq \sum_{k=1}^{N_n}\tau^k_n\|\xi^k_n\|_{U^*}^q\|\iota\|_{\C{L}(V;U)}^q.
\end{equation}
For all $k\in \{1,\ldots,N_n\}$ and a.e. $t\in (t^{k-1}_n, t^k_n)$ there exists $\xi(t)\in F(t,\iota u^{k-1+\theta}_n)$ such that $\xi^k_n=\frac{1}{\tau^k_n}\int_{t^{k-1}_n}^{t^k_n}\xi(t)\, dt$. Using the Jensen inequality, we get
\begin{equation}\label{eqn:multivalued_estimate}
\sum_{k=1}^{N_n}\tau^k_n\|\iota^*\xi^k_n\|_{V^*}^q \leq  \|\iota\|_{\C{L}(V;U)}^q \sum_{k=1}^{N_n}\int_{t^{k-1}_n}^{t^k_n}\|\xi(t)\|_{U^*}^q\, dt.
\end{equation}

In the case $A)$ of $H(F)(iv)$ we have
\begin{equation}\label{eqn:casea}
\|\xi(t)\|_{U^*}^q\leq 2^{q-1}(c_1^q+\|\iota u^{k-1+\theta}_n\|^q)\leq 2^{q-1}(c_1^q+\|p\|_{\C{L}(U;H)}^q\|u^{k-1+\theta}_n\|_H^q),
\end{equation}
for a.e. $t\in (t^{k-1}_n, t^k_n)$. The right-hand side of (\ref{eqn:casea}) is bounded by a constant by Lemma \ref{lemma:estimates} and, substituting this expression into (\ref{eqn:multivalued_estimate}),
we obtain the desired estimate.

If the hypothesis $B)$ of $H(F)(iv)$ holds, then we obtain
\begin{equation}\label{eqn:caseb}
\|\xi(t)\|_{U^*}^q\leq 2^{q-1}(c_1^q+\|\iota u^{k-1+\theta}_n\|^q)\leq 2^{q-1}(c_2^q+\|\iota\|_{\C{L}(V;U)}^q\|u^{k-1+\theta}_n\|^p),
\end{equation}
for a.e. $t\in (t^{k-1}_n, t^k_n)$. Substituting (\ref{eqn:caseb}) into (\ref{eqn:multivalued_estimate}), we get
\begin{equation}
\sum_{k=1}^{N_n}\tau^k_n\|\iota^*\xi^k_n\|_{V^*}^q \leq  \|\iota\|_{\C{L}(V;U)}^q 2^{q-1}\left(c_2^q T+\|\iota\|_{\C{L}(V;U)}^q\sum_{k=1}^{N_n}\tau^k_n\|u^{k-1+\theta}_n\|^p\right).
\end{equation}
Again, by Lemma \ref{lemma:estimates}, we obtain the desired estimate. In order to derive (\ref{eqn:estimate_der}) from (\ref{eq:theta_scheme}) we obtain
\begin{equation}
\sum_{k=1}^{N_n}\tau^k_n\left\|\frac{u^k_n-u^{k-1}_n}{\tau^k_n}\right\|_{V^*}^q \leq \sum_{k=1}^{N_n}\tau^k_n\left\|f^k_n-A^k_n u^{k-1+\theta}_n-\iota^*\xi^k_n\right\|_{V^*}^q,
\end{equation}
where $\xi^k_n\in F^k_n(\iota u^{k-1+\theta}_n)$. Moreover, we have
\begin{equation}
\sum_{k=1}^{N_n}\tau^k_n\left\|\frac{u^k_n-u^{k-1}_n}{\tau^k_n}\right\|_{V^*}^q \leq C\sum_{k=1}^{N_n}\tau^k_n(\|f^k_n\|_{V^*}^q+\|A^k_n u^{k-1+\theta}_n\|_{V^*}^q+\|\iota^*\xi^k_n\|_{V^*}^q),
\end{equation}
where $C>0$. Now (\ref{eqn:estimate_der}) follows from the estimates (\ref{eqn:estimate_ak}), (\ref{eqn:estimate_fk}), and the following inequality
$$
\sum_{k=1}^{N_n}\tau^k_n\|f^k_n\|_{V^*}^q = \sum_{k=1}^{N_n}\tau^k_n\left\|\frac{1}{\tau^k_n}\int_{t^{k-1}_n}^{t^k_n}f(t)\, dt\right\|_{V^*}^q\leq \sum_{k=1}^{N_n}\int_{t^{k-1}_n}^{t^k_n}\|f(t)\|_{V^*}^q\, dt=\|f\|_{\C{V}^*}^q.
$$
The proof is complete.
\end{proof}

\noindent We are ready to define the piecewise constant interpolants $\bar{u}_n:[0,T]\to V$ and the piecewise linear interpolants $\hat{u}_n:[0,T]\to V$ by
$$
\bar{u}_n(t)=u^{k-1+\theta}_n\,\,\ \mbox{for}\,\,\ t\in(t_n^{k-1},t_n^k]\,\,\ \ \mbox{and}\,\,\ \ \bar{u}_n(0)=u^{\theta}_n,
$$
$$
\hat{u}_n(t)=u^{k-1}_n + \frac{u^k_n-u^{k-1}_n}{\tau^k_n}(t-t^{k-1}_n)\,\,\ \mbox{for}\,\,\ t\in[t_n^{k-1},t_n^k].
$$
We also define the piecewise constant function $\bar{\eta}_n:[0,T]\to V^*$ by
\begin{equation}\label{defn:eta_bar}
\bar{\eta}_n(t)=\iota^*\xi^k_n\,\,\ \mbox{for}\,\,\ t\in(t_n^{k-1},t_n^k]\,\,\ \ \mbox{and}\,\,\ \ \bar{\eta}_n(0)=\iota^*\xi^1_n,
\end{equation}
where $\xi^k_n$ is an element of $F^k_n(\iota u^{k-1+\theta}_n)$ such that the inclusion in (\ref{eq:theta_scheme}) is realized.

\noindent We formulate the following lemma.

\begin{lemma}\label{lemma:bounded}
Under assumptions $H(A), H(F), H(U), H_0, H(t)$ and $\theta\in [\frac{1}{2},1]$, there exists $n_0\in \mathbb{N}$ such that for $n\geq n_0$, the sequence $\{\bar{u}_n\}$ is bounded in $\C{V} \cap L^\infty(0,T;H)$ and the sequence $\{\hat{u}_n\}$ is bounded in $C([0,T];H)$ with $\{\hat{u}_n'\}$ bounded in
$\C{V}^*$. Furthermore $\{\bar{u}_n\}$ is bounded in $BV^q(0,T;V^*)$. Finally $\{\pi^{q,V^*}_n\C{A}\bar{u}_n\}$ and $\{\bar{\eta}_n\}$ are bounded in $\C{V}^*$.
\end{lemma}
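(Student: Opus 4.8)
The plan is to read off every asserted bound from the discrete a priori estimates of Lemma~\ref{lemma:estimates} and Lemma~\ref{lemma:estimates2} by simply computing the relevant Bochner norms of the interpolants. First I would fix $\tau_0>0$ as provided by those lemmas and invoke $H(t)(i)$ (i.e. $\tau^{max}_n\to 0$) to produce $n_0\in\mathbb{N}$ with $\tau^{max}_n<\tau_0$ for all $n\geq n_0$; from then on every discrete estimate holds with a constant $M$ independent of $n$. Most of the claims are then immediate. Since $\bar{u}_n\equiv u^{k-1+\theta}_n$ on the step of length $\tau^k_n$, one has $\|\bar{u}_n\|_{\C{V}}^p=\sum_k \tau^k_n\|u^{k-1+\theta}_n\|^p$, bounded by Lemma~\ref{lemma:estimates}; by convexity $\|u^{k-1+\theta}_n\|_H\leq\max_{j}\|u^j_n\|_H$, and this maximum over $j=0,\dots,N_n$ (including $u^0_n=u_{0n}$, which converges in $H$) is controlled by the same lemma, giving the $L^\infty(0,T;H)$ bound for $\bar{u}_n$ and, the nodal values of $\hat{u}_n$ being the $u^k_n$, the $C([0,T];H)$ bound for $\hat{u}_n$. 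As $\hat{u}_n'=(u^k_n-u^{k-1}_n)/\tau^k_n$ on the $k$-th step, $\|\hat{u}_n'\|_{\C{V}^*}^q=\sum_k \tau^k_n\|(u^k_n-u^{k-1}_n)/\tau^k_n\|_{V^*}^q$ is bounded by (\ref{eqn:estimate_der}). The bounds on $\pi^{q,V^*}_n\C{A}\bar{u}_n$ and $\bar{\eta}_n$ are direct as well: the averaging operator turns $\C{A}\bar{u}_n$ into the piecewise constant function with value $A^k_n u^{k-1+\theta}_n$ on step $k$, so $\|\pi^{q,V^*}_n\C{A}\bar{u}_n\|_{\C{V}^*}^q=\sum_k \tau^k_n\|A^k_n u^{k-1+\theta}_n\|_{V^*}^q$ is controlled by (\ref{eqn:estimate_ak}), while $\|\bar{\eta}_n\|_{\C{V}^*}^q=\sum_k \tau^k_n\|\iota^*\xi^k_n\|_{V^*}^q$ is controlled by (\ref{eqn:estimate_fk}).

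The only genuinely non-routine point is the $BV^q(0,T;V^*)$ bound on $\bar{u}_n$, since its seminorm is a supremum over all finite partitions of $[0,T]$ and one must prevent lumping several jumps into a single subinterval from inflating the sum. The device I would use is the observation that the constant value $u^{k-1+\theta}_n$ of $\bar{u}_n$ on the $k$-th step coincides with $\hat{u}_n$ evaluated at the interior point $s_k=(1-\theta)t^{k-1}_n+\theta t^k_n$, because $\hat{u}_n(s_k)=u^{k-1}_n+\theta(u^k_n-u^{k-1}_n)=u^{k-1+\theta}_n$. Hence, for any subinterval $(a,b)$ with $a$ lying in step $j$ and $b$ in step $l$, we may write $\bar{u}_n(b)-\bar{u}_n(a)=\hat{u}_n(s_l)-\hat{u}_n(s_j)=\int_{s_j}^{s_l}\hat{u}_n'(s)\,ds$, and Hölder's inequality yields $\|\bar{u}_n(b)-\bar{u}_n(a)\|_{V^*}^q\leq T^{q-1}\int_{s_j}^{s_l}\|\hat{u}_n'(s)\|_{V^*}^q\,ds$.

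Given any partition $\{\sigma_i=(a_i,b_i)\}$, the subintervals are ordered and pairwise disjoint, so if $a_i,b_i$ lie in steps $j(i)\leq l(i)$ then $l(i)\leq j(i+1)$, which, as $s_k$ is increasing in $k$, makes the integration windows $(s_{j(i)},s_{l(i)})$ pairwise disjoint subsets of $(0,T)$ (subintervals contained in a single step contribute zero). Summing over $i$ therefore gives $\sum_i\|\bar{u}_n(b_i)-\bar{u}_n(a_i)\|_{V^*}^q\leq T^{q-1}\|\hat{u}_n'\|_{\C{V}^*}^q$, a bound independent of the chosen partition. Passing to the supremum, $\|\bar{u}_n\|_{BV^q(0,T;V^*)}^q\leq T^{q-1}\|\hat{u}_n'\|_{\C{V}^*}^q$, which is finite by the $\C{V}^*$ bound on $\hat{u}_n'$ just established.

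I expect this last estimate to be the main obstacle: rewriting $\bar{u}_n$ through $\hat{u}_n$ and its derivative is what converts the supremum over partitions into an integral of $\|\hat{u}_n'\|_{V^*}^q$ over disjoint windows, and thereby sidesteps the combinatorial difficulty that a naive jump-by-jump estimate cannot resolve for $q>1$, where merging aligned jumps can strictly increase the $q$-variation. Everything else is a direct transcription of the discrete bounds, valid uniformly for $n\geq n_0$ thanks to $H(t)(i)$ and $\theta\in[\tfrac12,1]$.
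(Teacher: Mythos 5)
Your proof is correct, and while the routine parts (the $\C{V}$, $L^\infty(0,T;H)$, $C([0,T];H)$ and $\C{V}^*$ bounds read off from Lemmata \ref{lemma:estimates} and \ref{lemma:estimates2}) coincide with the paper's one-line treatment of them, your handling of the $BV^q(0,T;V^*)$ seminorm takes a genuinely different route. The paper stays on the discrete level: it takes the optimal partition, whose vertices fall into grid intervals indexed $m^0_n<\dots<m^{M_n}_n$, applies convexity of $s\mapsto s^q$ to split each lumped difference $u_n^{m^j_n-1+\theta}-u_n^{m^{j-1}_n-1+\theta}$ into consecutive differences at the price of a factor $(m^j_n-m^{j-1}_n)^{q-1}$, absorbs the aggregate factor $(N_n\tau^{max}_n)^{q-1}$ by the grid-regularity hypothesis $H(t)(ii)$ (i.e. $N_n\tau^{max}_n\leq K N_n\tau^{min}_n\leq KT$, and the same hypothesis is implicitly needed again to compare $\tau^{i-1}_n$ with $\tau^i_n$ after the second convexity step), and only then invokes (\ref{eqn:estimate_der}). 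You instead observe that $\bar{u}_n$ is the sampling of the absolutely continuous interpolant $\hat{u}_n$ at the interior points $s_k=t^{k-1}_n+\theta\tau^k_n$, so that every increment of $\bar{u}_n$ is an integral of $\hat{u}_n'$ over a window $(s_{j(i)},s_{l(i)})$, the disjointness of these windows being automatic from the ordering of the partition; H\"{o}lder's inequality then gives $\sup_\pi\sum_i\|\bar u_n(b_i)-\bar u_n(a_i)\|^q_{V^*}\leq T^{q-1}\|\hat u_n'\|^q_{\C{V}^*}$, and (\ref{eqn:estimate_der}) finishes. Both devices correctly defuse the danger you identify (for $q>1$ merging aligned jumps can strictly increase the $q$-variation, so a naive jump-by-jump bound fails), but your route buys something concrete: it uses only $H(t)(i)$ together with (\ref{eqn:estimate_der}), so the regularity assumption $H(t)(ii)$ is not needed for this lemma at all. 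Since this $BV^q$ estimate is precisely the place where the paper uses $H(t)(ii)$ (the paper itself remarks that the assumption is needed ``only to deal with the multivalued term,'' which it handles via this bound and Proposition \ref{prop:embedding}), your argument indicates that the regular-grid hypothesis could in principle be dropped from the convergence results, which is a genuine strengthening rather than merely a cosmetic variant.
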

\begin{proof}
We choose $n_0$ such that $\tau^{max}_n < \tau_0$ for $n\geq n_0$, where $\tau_0$ is the smaller one of the tho constants appearing respectively in Lemmata \ref{lemma:estimates} and \ref{lemma:estimates2}. Such choice is possible by hypothesis $H(t)(i)$.
It suffices to show the $BV^q$ estimate since all the other estimates follow directly from Lemmata \ref{lemma:estimates} and \ref{lemma:estimates2}.
The $BV^q$ seminorm of $\bar{u}_n$ is given by
$$
\|\bar{u}_n\|_{BV^q(0,T;V^*)}^q=\sum_{j=1}^{M_n}\|u_n^{m^j_n-1+\theta}-u_n^{m^{j-1}_n-1+\theta}\|_{V^*}^q,
$$
and it is attained by the partition such that its vertices fall in the grid
intervals indexed by $m^0_n,m^1_n,\ldots,m^{M_n-1}_n,m^{M_n}_n$, where $m^0_n=1$ and $m^{M_n}_n=N_n$.
By convexity of the function $h(s)=s^q$, we obtain
$$
\|\bar{u}_n\|_{BV^q(0,T;V^*)}^q\leq\sum_{j=1}^{M_n}(m^{j-1}_n-m^j_n)^{q-1}\sum_{i=m^{j-1}_n+1}^{m^j_n}\|u_n^{i-1+\theta}-u_n^{i-2+\theta}\|_{V^*}^q\leq
$$
$$
\leq\sum_{j=1}^{M_n}(m^{j-1}_n-m^j_n)^{q-1}(\tau^{max}_n)^{q-1}\sum_{i=m^{j-1}_n+1}^{m^j_n}\tau^i_n\left\|\frac{u_n^{i-1+\theta}-u_n^{i-2+\theta}}{\tau^i_n}\right\|_{V^*}^q\leq
$$
$$
\leq (N_n\tau^{max}_n)^{q-1} \sum_{i=2}^{N_n}\tau^i_n\left\|\frac{u_n^{i-1+\theta}-u_n^{i-2+\theta}}{\tau^i_n}\right\|_{V^*}^q.
$$
Using $H(t)(ii)$, we get
$$
\|\bar{u}_n\|_{BV^q(0,T;V^*)}^q\leq (K T)^{q-1} \sum_{i=2}^{N_n}\tau^i_n\left\|\frac{\theta(u_n^{i}-u_n^{i-1})+(1-\theta)(u_n^{i-1}-u_n^{i-2})}{\tau^i_n}\right\|_{V^*}^q.
$$
We use the convexity of the function of the function $h(s)=s^q$ again to find that
$$
\|\bar{u}_n\|_{BV^q(0,T;V^*)}^q\leq (K T)^{q-1} \sum_{i=1}^{N_n}\tau^i_n\left\|\frac{u_n^{i}-u_n^{i-1}}{\tau^i_n}\right\|_{V^*}^q,
$$
which, by (\ref{eqn:estimate_der}), gives the assertion of the lemma.
\end{proof}

\noindent The next Lemma establishes weak and weak-* limits of subsequences of constructed interpolants.

\begin{lemma}\label{lemma:limit}
Under assumptions $H(A), H(F), H(U), H_0, H(t)$ and $\theta\in [\frac{1}{2},1]$, there exists $u\in \C{W}$ as well as $\zeta,\eta\in \C{V}^*$, and a subsequence of indices such that for this subsequence (still denoted by $n$), we have
\begin{eqnarray}
&&\bar{u}_n\to u \ \ \mbox{weakly in} \ \ \C{V},\\
&&\bar{u}_n\to u \ \ \mbox{weakly-* in} \ \ L^\infty(0,T;H),\\
&&\hat{u}_n\to u \ \ \mbox{weakly-* in} \ \ L^\infty(0,T;H),\label{eqn:baru}\\
&&\hat{u}'_n\to u' \ \ \mbox{weakly in} \ \ \C{V}^*,\label{eqn:baruprim}\\
&&\iota\bar{u}_n\to \iota u \ \ \mbox{strongly in} \ \ \C{U},\label{eqn:tracelimit}\\
&&\pi^{q,V^*}_n\C{A}\bar{u}_n \to \zeta \ \ \mbox{weakly in} \ \ \C{V}^*,\\
&&\bar{\eta}_n\to \eta \ \ \mbox{weakly in} \ \ \C{V}^*.
\end{eqnarray}
\end{lemma}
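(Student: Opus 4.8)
The plan is to obtain all seven convergences by successive extraction of subsequences from the bounds collected in Lemma \ref{lemma:bounded}, and then to reconcile the various limits into a single element $u\in\C{W}$.

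First I would exploit reflexivity together with the Banach--Alaoglu theorem. Since $\C{V}=L^p(0,T;V)$ is reflexive and $\{\bar u_n\}$ is bounded there, a subsequence converges weakly in $\C{V}$ to some $u$; since $\{\bar u_n\}$ is also bounded in $L^\infty(0,T;H)=(L^1(0,T;H))^*$, a further subsequence converges weak-$*$ there. Likewise $\{\hat u_n\}$ is bounded in $L^\infty(0,T;H)$ (being bounded in $C([0,T];H)$) and $\{\hat u_n'\}$ is bounded in the reflexive space $\C{V}^*$, so along a further subsequence $\hat u_n$ converges weak-$*$ in $L^\infty(0,T;H)$ to some $\tilde u$ and $\hat u_n'$ converges weakly in $\C{V}^*$ to some $w$. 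Finally $\{\pi^{q,V^*}_n\C{A}\bar u_n\}$ and $\{\bar\eta_n\}$ are bounded in $\C{V}^*$, which yields weak limits $\zeta$ and $\eta$. Passing to a diagonal subsequence makes all these convergences hold simultaneously; this already establishes the assertions for $\zeta$ and $\eta$.

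Next I would identify the limits of $\bar u_n$ and $\hat u_n$ as one and the same $u$. The key elementary computation is that on each interval $(t^{k-1}_n,t^k_n]$ one has $\bar u_n-\hat u_n=(u^k_n-u^{k-1}_n)\bigl(\theta-(t-t^{k-1}_n)/\tau^k_n\bigr)$, whose scalar factor is bounded by $1$; integrating the $q$-th power and using $\|u^k_n-u^{k-1}_n\|_{V^*}^q=(\tau^k_n)^q\|(u^k_n-u^{k-1}_n)/\tau^k_n\|_{V^*}^q$ together with (\ref{eqn:estimate_der}) gives
$$
\|\bar u_n-\hat u_n\|_{\C{V}^*}^q\leq (\tau^{max}_n)^q\sum_{k=1}^{N_n}\tau^k_n\Bigl\|\frac{u^k_n-u^{k-1}_n}{\tau^k_n}\Bigr\|_{V^*}^q\leq (\tau^{max}_n)^q M,
$$
so that $\bar u_n-\hat u_n\to 0$ strongly in $\C{V}^*$ by $H(t)(i)$. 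Each of the weak and weak-$*$ limits above also determines a limit in the space of $V^*$-valued distributions on $(0,T)$, where limits are unique. Consequently the weak-$*$ $L^\infty(0,T;H)$ limit of $\bar u_n$ coincides with its weak $\C{V}$ limit $u$, and since $\bar u_n-\hat u_n\to0$ in $\C{V}^*$ the weak-$*$ limit $\tilde u$ of $\hat u_n$ equals $u$ as well; this yields the first three convergences. For the derivative, $\hat u_n$ is piecewise linear, so its classical derivative coincides with its distributional derivative; since $\hat u_n\to u$ in $V^*$-valued distributions and differentiation is continuous there, while $\hat u_n'\to w$ weakly in $\C{V}^*$ and hence in distributions, uniqueness forces $u'=w\in\C{V}^*$. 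Thus $u\in\C{W}$ and (\ref{eqn:baruprim}) holds.

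Finally I would prove the strong trace convergence (\ref{eqn:tracelimit}). By Lemma \ref{lemma:bounded} the sequence $\{\bar u_n\}$ is bounded in $M^{p,q}(0,T;V,V^*)$, so hypothesis $H(U)$ (compactness of the Nemytskii map $\hat\iota:M^{p,q}(0,T;V,V^*)\to\C{U}$, the abstract form underlying Proposition \ref{prop:embedding}) shows that $\{\iota\bar u_n\}$ is relatively compact in $\C{U}$; passing to a further subsequence, $\iota\bar u_n$ converges strongly in $\C{U}$. To pin down the limit I would use that $\iota:V\to U$ is linear and continuous, so the induced Nemytskii map $\C{V}\to\C{U}$ is weakly continuous and $\iota\bar u_n\to\iota u$ weakly in $\C{U}$; the strong and weak limits must agree, giving $\iota\bar u_n\to\iota u$ strongly. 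I expect the only genuinely delicate point to be the reconciliation of the limits of $\bar u_n$ and $\hat u_n$ carried out in the second step---everything else is essentially bookkeeping in the extraction of subsequences---because it is there that the structure of the $\theta$-scheme and the derivative estimate (\ref{eqn:estimate_der}) are essential to guarantee that the piecewise constant and piecewise linear interpolants share a common limit lying in $\C{W}$.
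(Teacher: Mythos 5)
Your proposal is correct and follows essentially the same route as the paper: extract weak/weak-$*$/strong limits from the bounds of Lemma \ref{lemma:bounded} (with $H(U)$ and the $M^{p,q}$ bound giving the strong convergence of $\iota\bar{u}_n$), and then identify the limits of $\bar{u}_n$ and $\hat{u}_n$ via exactly the same key estimate $\|\hat{u}_n-\bar{u}_n\|_{\C{V}^*}^q\leq C(\tau^{max}_n)^q\sum_k\tau^k_n\|(u^k_n-u^{k-1}_n)/\tau^k_n\|_{V^*}^q$ combined with (\ref{eqn:estimate_der}) and $H(t)(i)$. The extra detail you supply (the distributional uniqueness argument and the identification of $u'$) is exactly what the paper leaves implicit as "standard".
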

\begin{proof}
The fact that limits of appropriate subsequences exist follows directly from Lemmata \ref{lemma:estimates} and \ref{lemma:estimates2}.
It only suffices to prove that limits of $\hat{u}_n$ and $\bar{u}_n$ coincide. This is done in a standard way (see proof of Lemma 4 in \cite{Emmrich2009})
by showing the estimate on $\|\hat{u}_n-\bar{u}_n\|_{\C{V}^*}.$ By the direct calculation we have
$$
\|\hat{u}_n-\bar{u}_n\|_{\C{V}^*}^q = \sum_{k=1}^{N_n}\int_{t^{k-1}_n}^{t^k_n}\left\|u^{k-1+\theta}_n-u^{k-1}_n-\frac{u^k_n-u^{k-1}_n}{\tau^k_n}(t-t^{k-1}_n)\right\|_{V^*}^q\ dt=
$$
$$
=\sum_{k=1}^{N_n}\tau^k_n\left\|\frac{u^k_n-u^{k-1}_n}{\tau^k_n}\right\|_{V^*}^q \frac{(\tau^k_n)^q}{q+1} \leq \frac{(\tau^{max}_n)^q}{q+1}\sum_{k=1}^{N_n}\tau^k_n\left\|\frac{u^k_n-u^{k-1}_n}{\tau^k_n}\right\|_{V^*}^q.
$$
By the estimate (\ref{eqn:estimate_der}), it follows that $\hat{u}_n-\bar{u}_n\to 0$ in $\C{V}^*$ and therefore limits of two sequences must coincide.
\end{proof}

\begin{remark}\label{rem:ini_h}
Note that in the case of implicit Euler scheme, i.e. $\theta=1$, the lemmata \ref{lemma:estimates}, \ref{lemma:estimates2}, \ref{lemma:bounded}, and \ref{lemma:limit} remain valid if elements of the sequence $\{u_{0n}\}$, that approximates the initial condition $u_0$, belong to $H$ and not necessarily to $V$.  
\end{remark}

\begin{theorem} \label{thm:main}
Under assumptions $H(A), H(F), H(U), H_0, H(t)$ and $\theta\in [\frac{1}{2},1]$, the function $u$ obtained in Lemma \ref{lemma:limit}
solves Problem $\mathbf{(\C{P})}$.
\end{theorem}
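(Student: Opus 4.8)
The plan is to reformulate Problem $\mathbf{(\C{P}^n)}$ in terms of the interpolants and pass to the limits furnished by Lemma \ref{lemma:limit}. Observe that \eqref{eq:theta_scheme} is equivalent, for a.e.\ $t\in(0,T)$, to
\begin{equation*}
\hat u_n'(t)+(\pi^{q,V^*}_n\C{A}\bar u_n)(t)+\bar\eta_n(t)=(\pi^{q,V^*}_n f)(t),
\end{equation*}
since $\hat u_n'(t)=(u^k_n-u^{k-1}_n)/\tau^k_n$, $(\pi^{q,V^*}_n\C{A}\bar u_n)(t)=A^k_n(u^{k-1+\theta}_n)$ and $\bar\eta_n(t)=\iota^*\xi^k_n$ on $(t^{k-1}_n,t^k_n]$. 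Writing $\bar\eta_n=\iota^*\bar\xi_n$, where $\bar\xi_n\in\C{U}^*$ equals $\xi^k_n$ on each subinterval, the growth conditions $H(F)(iv)$ together with Lemma \ref{lemma:estimates} bound $\bar\xi_n$ in $\C{U}^*$, so along a further subsequence $\bar\xi_n\to\xi$ weakly in $\C{U}^*$ with $\eta=\iota^*\xi$. Using $\hat u_n'\to u'$, $\pi^{q,V^*}_n\C{A}\bar u_n\to\zeta$ and $\bar\eta_n\to\eta$ weakly in $\C{V}^*$ from Lemma \ref{lemma:limit}, and $\pi^{q,V^*}_n f\to f$ strongly in $\C{V}^*$ by Lemma 3.3 of \cite{Carstensen1999}, I pass to the limit to obtain
\begin{equation*}
u'+\zeta+\eta=f\qquad\text{in }\C{V}^*.
\end{equation*}
It then remains to recover the initial condition and to identify $\zeta=\C{A}u$ and $\xi(t)\in F(t,\iota u(t))$.

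To recover $u(0)=u_0$ I would test the discrete equation with $\phi\in C^1([0,T];V)$, $\phi(T)=0$, integrate by parts in $\int_0^T\langle\hat u_n',\phi\rangle\,dt$, and pass to the limit using $\hat u_n\to u$ weakly-$*$ in $L^\infty(0,T;H)$ and $\hat u_n(0)=u_{0n}\to u_0$ in $H$; comparing with the integration-by-parts formula for $u\in\C{W}$ and invoking the density of $V$ in $H$ gives $u(0)=u_0$ (and, en route, $\hat u_n(T)\to u(T)$ weakly in $H$). For the multifunction, note that the scheme provides a selection $\zeta_n\in\C{U}^*$ with $\zeta_n(t)\in F(t,\iota\bar u_n(t))$ a.e.\ and $\bar\xi_n=\pi^{q,U^*}_n\zeta_n$, whence $\zeta_n\to\xi$ weakly in $\C{U}^*$ as well. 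Since $\iota\bar u_n\to\iota u$ strongly in $\C{U}$ by \eqref{eqn:tracelimit} — which is exactly where the compactness in $H(U)$ enters — I would apply the convergence theorem of Aubin and Cellina (\cite{Aubin1984}; Proposition 2 in \cite{Migorski2009}), using the closed convex values $H(F)(ii)$ and the strong-weak upper semicontinuity $H(F)(iii)$, to conclude $\xi(t)\in F(t,\iota u(t))$ for a.e.\ $t$, hence $\eta(t)\in\iota^*F(t,\iota u(t))$. I expect this passage to the limit in the nonmonotone multivalued term to be the main obstacle, since it is the one place where convexity, graph-closedness and the strong convergence of traces must all be combined.

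Finally, to identify $\zeta$ I would use the $M^{p,q}(0,T;V,V^*)$-pseudomonotonicity of $\C{A}$ (Lemma \ref{lemma:pseudo:nemytskii}), valid because $\{\bar u_n\}$ is bounded in $M^{p,q}(0,T;V,V^*)$ (Lemma \ref{lemma:bounded}) and $\bar u_n\to u$ weakly in $\C{V}$; the crux is to verify $\limsup_{n\to\infty}\langle\C{A}\bar u_n,\bar u_n-u\rangle\le0$. Testing the discrete equation with $\bar u_n$ and using the self-adjointness of $\pi_n$ (so that $\langle\pi^{q,V^*}_n\C{A}\bar u_n,\bar u_n\rangle=\langle\C{A}\bar u_n,\bar u_n\rangle$, while $\langle\C{A}\bar u_n,u\rangle\to\langle\zeta,u\rangle$ via $\pi^{p,V}_n u\to u$ strongly and the $\C{V}^*$-boundedness of $\{\C{A}\bar u_n\}$ from $H(A)(iv)$ and Lemma \ref{lemma:estimates}) reduces the claim to controlling $\langle\hat u_n',\bar u_n\rangle$. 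The algebraic identity from the proof of Lemma \ref{lemma:estimates} yields
\begin{equation*}
\int_0^T\langle\hat u_n',\bar u_n\rangle\,dt=\tfrac12\big(\|u^{N_n}_n\|_H^2-\|u_{0n}\|_H^2\big)+\tfrac{2\theta-1}{2}\sum_{k=1}^{N_n}\|u^k_n-u^{k-1}_n\|_H^2,
\end{equation*}
where the last sum is nonnegative \emph{precisely because} $\theta\ge\frac12$. Combining this with $\langle\bar\eta_n,\bar u_n\rangle=\langle\bar\xi_n,\iota\bar u_n\rangle_{\C{U}^*\times\C{U}}\to\langle\xi,\iota u\rangle=\langle\eta,u\rangle$ (again using the strong trace convergence), the weak lower semicontinuity of $\|\cdot\|_H$ applied to $\hat u_n(T)\to u(T)$, $u_{0n}\to u_0$, the limit equation, and the identity $\langle u',u\rangle=\tfrac12(\|u(T)\|_H^2-\|u(0)\|_H^2)$ with $u(0)=u_0$, all boundary terms cancel and $\limsup_{n}\langle\C{A}\bar u_n,\bar u_n-u\rangle\le0$ follows. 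Pseudomonotonicity then gives $\langle\C{A}u,u-y\rangle\le\liminf_n\langle\C{A}\bar u_n,\bar u_n-y\rangle$ for every $y\in\C{V}$; choosing $y=u$ shows $\langle\C{A}\bar u_n,\bar u_n\rangle\to\langle\zeta,u\rangle$, after which the inequality collapses to $\langle\C{A}u-\zeta,u-y\rangle\le0$ for all $y$, forcing $\zeta=\C{A}u$. The three identifications together give $u'(t)+A(t,u(t))+\iota^*F(t,\iota u(t))\ni f(t)$ for a.e.\ $t$ with $u(0)=u_0$, i.e.\ $u$ solves Problem $\mathbf{(\C{P})}$.
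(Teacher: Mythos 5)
Your proposal is correct and follows essentially the same route as the paper's proof: the same interpolant reformulation $\hat u_n'+\pi^{q,V^*}_n\C{A}\bar u_n+\bar\eta_n=\pi^{q,V^*}_nf$, identification of the multivalued term via boundedness of the selections, the strong trace convergence $\iota\bar u_n\to\iota u$ in $\C{U}$ and the Aubin--Cellina theorem, the discrete energy identity with $\theta\ge\frac12$ plus weak lower semicontinuity of $\|\cdot\|_H$ at $t=T$ to obtain $\limsup_{n\to\infty}\langle\C{A}\bar u_n,\bar u_n-u\rangle_{\C{V}^*\times\C{V}}\le 0$, and $M^{p,q}(0,T;V,V^*)$-pseudomonotonicity of $\C{A}$ to force $\zeta=\C{A}u$. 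The only departures are cosmetic: you recover $u(0)=u_0$ (and $\hat u_n(T)\to u(T)$ weakly in $H$) by testing with smooth functions rather than the paper's appeal to Simon's $C([0,T];V^*)$ compactness, and you package the commutation steps ($\pi^{q,V^*}_n\C{A}\bar u_n$ versus $\C{A}\bar u_n$, and $\pi^{q,U^*}_n\zeta_n$ versus $\zeta_n$) through self-adjointness of $\pi_n$ together with $\pi_n w\to w$ and boundedness, which is the paper's explicit double-integral decomposition in different clothing.
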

\begin{proof}
First we show that $u$ satisfies the initial condition. From (\ref{eqn:baru}) and (\ref{eqn:baruprim}), it follows, by Corollary 4 of \cite{Simon1987}, that
\begin{equation}\label{eqn:strongconv}
\hat{u}_n\to u \ \ \mbox{strongly in} \ \ C([0,T];V^*),
\end{equation}
and furthermore $\hat{u}_n(0)\to u(0)$ strongly in $V^*$. Since $\hat{u}_n(0)=u_{0n}$ and $u_{0n}\to u_0$ strongly in $H$, from the uniqueness of the limit,
it follows that $u(0)=u_0$.

Now let us observe that from (\ref{eq:theta_scheme}), the following equality holds in $\C{V}^*$ for $k\in \{1,\ldots,N_n\}$
\begin{equation}\label{eqn:theta_scheme}
\hat{u}_n'+\pi^{q,V^*}_n\C{A}\bar{u}_n+\bar{\eta}_n=\pi^{q,V^*}_{n}f.
\end{equation}
Note that $\pi^{q,V^*}_{n} f\to f$ strongly in $\C{V}^*$. Therefore we can pass to the limit in
(\ref{eqn:theta_scheme}) and find
\begin{equation}\label{eqn:actual}
u'+\zeta+\eta=f.
\end{equation}
To conclude the proof we must verify that $\zeta=\C{A}u$ and $\eta(t)\in \iota^*F(t,\iota u(t))$ for a.e. $t$.

Next, we verify that $\eta(t)\in \iota^*F(t,\iota u(t))$ for a.e. $t$. We remind that $\bar{\eta}_n(t)=\iota^* \xi^k_n$ for $t\in (t^{k-1}_n,t^k_n)$ and $\xi^k_n=\frac{1}{\tau^k_n}\int^{t^{k-1}_n}_{t^k_n}\xi_n(t)\ dt$ with $\xi_n(t)\in F(t,\iota u_n^{k-1+\theta})$ for a.e. $t\in (t^{k-1}_n,t^k_n)$. Let us denote $\eta_n=\pi_n^{q,U^*}\xi_n$. For a.e. $t$ there holds $\bar{\eta}_n(t)=\iota^*\eta_n(t)$. We will show that
\begin{equation}\label{eqn:approxconverg}
\xi_n-\eta_n \to 0\ \ \mbox{weakly in}\ \ \C{U}^*.
\end{equation}
To this end, for $w\in \C{U}$ we have
\begin{eqnarray}
&&\int_0^T\langle \xi_n(t)-\eta_n(t),w(t)\rangle_{U^*\times U}\ dt = \nonumber\\
&&=  \sum_{k=1}^{N_n}\int_{t_n^{k-1}}^{t_n^k}\langle \xi_n(t)-\frac{1}{\tau^k_n}\int^{t^{k-1}_n}_{t^k_n}\xi_n(s)\ ds,w(t)\rangle_{U^*\times U}\ dt=\nonumber\\
&& = \sum_{k=1}^{N_n}\frac{1}{\tau^k_n}\int_{t_n^{k-1}}^{t_n^k}\int_{t_n^{k-1}}^{t_n^k}\langle \xi_n(t)-\xi_n(s),w(t)\rangle_{U^*\times U}\ ds\ dt.\nonumber
\end{eqnarray}
Analogously as in the proof of Lemma \ref{lemma:estimates2}, by the estimates (\ref{eqn:casea}) and (\ref{eqn:caseb}), the sequence $\xi_n$ is bounded
in $\C{U}^*$, so, for a subsequence, we may assume that $\xi_n\to \xi$ weakly in this space, where $\xi\in \C{U}^*$. We obtain
\begin{eqnarray}
&&\int_0^T\langle \xi_n(t)-\eta_n(t),w(t)\rangle_{U^*\times U}\ dt =\label{eqn:hemitozero}\\
&&=\sum_{k=1}^{N_n}\frac{1}{\tau^k_n}\int_{t_n^{k-1}}^{t_n^k}\int_{t_n^{k-1}}^{t_n^k}\langle \xi_n(t)-\xi(t),w(t)\rangle_{U^*\times U}\ ds\ dt + \nonumber\\
&&+\sum_{k=1}^{N_n}\frac{1}{\tau^k_n}\int_{t_n^{k-1}}^{t_n^k}\int_{t_n^{k-1}}^{t_n^k}\langle\xi(t)-\xi(s),w(t)\rangle_{U^*\times U}\ ds\ dt+\nonumber\\
&&+ \sum_{k=1}^{N_n}\frac{1}{\tau^k_n}\int_{t_n^{k-1}}^{t_n^k}\int_{t_n^{k-1}}^{t_n^k}\langle\xi(s)-\xi_n(s),w(t)\rangle_{U^*\times U}\ ds\ dt:=I_1+I_2+I_3.\nonumber
\end{eqnarray}
We consider three terms separately.
$$
I_1 = \sum_{k=1}^{N_n}\int_{t_n^{k-1}}^{t_n^k}\langle \xi_n(t)-\xi(t),w(t)\rangle_{U^*\times U}\ dt = \langle\xi_n-\xi,w\rangle_{\C{U}^*\times\C{U}}\to 0
$$
$$
I_2=\sum_{k=1}^{N_n}\int_{t_n^{k-1}}^{t_n^k}\langle\xi(t)-\frac{1}{\tau^k_n}\int_{t_n^{k-1}}^{t_n^k}\xi(s)\ ds,w(t)\rangle_{U^*\times U}\ ds\ dt =
\langle\xi-\pi^{q,U^*}_n \xi, w \rangle_{\C{U}^*\times\C{U}}.
$$
Since $\pi^{q,U^*}_n \xi \to \xi$ strongly in $\C{U}^*$, we conclude that $I_2\to 0$, as $n\to \infty$.
$$
I_3 = \sum_{k=1}^{N_n}\int_{t_n^{k-1}}^{t_n^k}\langle \xi_n(t)-\xi(t),\frac{1}{\tau^k_n} \int_{t_n^{k-1}}^{t_n^k}w(s)\ ds\rangle_{U^*\times U}\ dt = \langle\xi_n-\xi,\pi^{p,U}_n w \rangle_{\C{U}^*\times\C{U}}.
$$
Since $\pi^{p,U}_n w \to w$ strongly in $\C{U}$, we conclude that $I_3\to 0$, as $n\to \infty$. Now we have
$$
\iota^*\xi_n-\iota^*\eta_n \to 0\ \ \mbox{weakly in}\ \ \C{V}^*,
$$
which means that
$$
\iota^*\xi_n \to \eta\ \ \mbox{weakly in}\ \ \C{V}^*,
$$
and moreover $\eta = \iota^*\xi$. Since $\xi_n\to \xi$ weakly in $\C{U}^*$, $\iota\bar{u}_n\to\iota u$ strongly in $\C{U}$ and we have   $\xi_n(t)\in F(t,\iota \bar{u}_n(t))$ for a.e. $t\in (0,T)$, from the argument based on Aubin and Cellina theorem (see Proposition 2 in \cite{Migorski2009}), we conclude that $\xi(t)\in F(t,\iota u(t))$ for a.e. $t\in (0,T)$ and the result is shown.

Subsequently we verify that $\zeta=\C{A}u$. We show that
\begin{equation}\label{eqn:auxiliary}
\pi^{q,V^*}_n \C{A} \bar{u}_n - \C{A} \bar{u}_n \to 0 \ \ \mbox{weakly in}\ \ \C{V}^*.
\end{equation}
This is done similarly as in the proof of (\ref{eqn:approxconverg}). For all $w\in \C{V}$ we have
$$
\langle\pi^{q,V^*}_n \C{A} \bar{u}_n - \C{A} \bar{u}_n, w\rangle_{\C{V}^*\times \C{V}} =
$$
$$
=\sum_{k=1}^{N_n}\frac{1}{\tau_k^n}\int_{t_n^{k-1}}^{t_n^k}\int_{t_n^{k-1}}^{t_n^k}\langle A(s,\bar{u}_n(s)) - A(t,\bar{u}_n(t)),w(t)\rangle\ dt\ ds.
$$
Analogously, as in the proof of (\ref{eqn:estimate_ak}) in Lemma \ref{lemma:estimates2}, we obtain that $\C{A}\bar{u}_n$ is bounded in $\C{V}^*$ so we may extract a subsequence such that $\C{A}\bar{u}_n\to \lambda$ weakly in $\C{V}^*$ with $\lambda\in\C{V}^*$. Therefore we have
$$
\langle\pi^{q,V^*}_n \C{A} \bar{u}_n - \C{A} \bar{u}_n, w\rangle_{\C{V}^*\times \C{V}} =
$$
$$
=\sum_{k=1}^{N_n}\frac{1}{\tau_k^n}\int_{t_n^{k-1}}^{t_n^k}\int_{t_n^{k-1}}^{t_n^k}\langle A(s,\bar{u}_n(s)) - \lambda(s),w(t)\rangle\ dt\ ds+
$$
$$
+\sum_{k=1}^{N_n}\frac{1}{\tau_k^n}\int_{t_n^{k-1}}^{t_n^k}\int_{t_n^{k-1}}^{t_n^k}\langle \lambda(s)-\lambda(t),w(t)\rangle\ dt\ ds+
$$
$$
+\sum_{k=1}^{N_n}\frac{1}{\tau_k^n}\int_{t_n^{k-1}}^{t_n^k}\int_{t_n^{k-1}}^{t_n^k}\langle \lambda(t) - A(t,\bar{u}_n(t)) ,w(t)\rangle\ dt\ ds.
$$
Above expression tends to zero, by the same argument as in (\ref{eqn:hemitozero}). Note that we have also shown that
\begin{equation}\label{eqn:zeta}
\C{A}\bar{u}_n\to \zeta\ \ \mbox{weakly in}\ \ \C{V}^*,
\end{equation}
i.e. $\lambda=\zeta$. Now let us observe that for all $n\in\mathbb{N}$, we have
$
\langle \pi^{q,V^*}_n \C{A} \bar{u}_n - \C{A} \bar{u}_n, \bar{u}_n \rangle_{\C{V}^*\times \C{V}} = 0.
$
This follows from the following computation
$$
\langle \pi^{q,V^*}_n \C{A} \bar{u}_n - \C{A} \bar{u}_n, \bar{u}_n \rangle_{\C{V}^*\times \C{V}} =
$$
$$
=\sum_{k=1}^{N_n} \frac{1}{\tau^k_n}\int_{t^{k-1}_n}^{t^k_n}\int_{t^{k-1}_n}^{t^k_n}\langle A(s,u^{k-1+\theta}_n) - A(t,u^{k-1+\theta}_n),u^{k-1+\theta}_n\rangle\ ds\ dt=
$$
$$
= \sum_{k=1}^{N_n} \int_{t^{k-1}_n}^{t^k_n}\langle A(s,u^{k-1+\theta}_n) ,u^{k-1+\theta}_n\rangle\ ds -
$$
$$
- \sum_{k=1}^{N_n} \int_{t^{k-1}_n}^{t^k_n}\langle  A(t,u^{k-1+\theta}_n),u^{k-1+\theta}_n\rangle\ dt = 0.
$$
Thus, we have
\begin{equation} \label{eqn:int_nemytskii}
\lim_{n\to \infty}\langle\pi^{q,V^*}_n \C{A} \bar{u}_n - \C{A} \bar{u}_n,\bar{u}_n - u\rangle_{\C{V}^*\times \C{V}} = 0.
\end{equation}
We observe that
$$
\lim_{n\to\infty} \langle\pi^{q,V^*}_n f, \bar{u}_n - u\rangle_{\C{V}^*\times \C{V}} = 0,
$$
$$
\lim_{n\to\infty} \langle\bar{\eta}_n, \bar{u}_n - u\rangle_{\C{V}^*\times \C{V}} = \lim_{n\to\infty} \langle\eta_n, \iota \bar{u}_n - \iota u\rangle_{\C{U}^*\times \C{U}}=0,
$$
where the last limit follows from (\ref{eqn:tracelimit}) and the fact that $\eta_n\to \xi$ weakly in $\C{U}^*$. Now using the equalities
$$
\limsup_{n\to\infty}\langle \C{A} \bar{u}_n,\bar{u}_n - u\rangle_{\C{V}^*\times \C{V}} = \limsup_{n\to\infty}\langle \pi^{q,V^*}_n \C{A} \bar{u}_n,\bar{u}_n - u\rangle_{\C{V}^*\times \C{V}} =
$$
$$
 = \limsup_{n\to\infty}\langle \pi^{q,V^*}_n f - \hat{u}_n' - \bar{\eta}_n,\bar{u}_n - u\rangle_{\C{V}^*\times \C{V}},
$$
we obtain
$$
\limsup_{n\to\infty}\langle \C{A} \bar{u}_n,\bar{u}_n - u\rangle_{\C{V}^*\times \C{V}}
 = \limsup_{n\to\infty}\langle  \hat{u}_n',u-\bar{u}_n\rangle_{\C{V}^*\times \C{V}}.
$$
Now we observe that
$$
\langle\hat{u}_n',u-\bar{u}_n\rangle_{\C{V}^*\times \C{V}} = (\hat{u}_n',u-\bar{u}_n)_{\C{H}\times\C{H}} = (\hat{u}_n',\hat{u}_n-\bar{u}_n)_{\C{H}\times\C{H}} +
$$
$$
+ \frac{1}{2}(\|u_{n0}\|_H^2-\|\hat{u}_n(T)\|_H^2)+ \langle \hat{u}_n',u\rangle_{\C{V}^*\times \C{V}}
$$
and
\begin{equation}\label{eqn:BBB}
(\hat{u}_n',\hat{u}_n-\bar{u}_n)_{\C{H}\times\C{H}} = - \frac{2\theta-1}{2}\sum_{k=1}^{N_n}\|u^k_n-u^{k-1}_n\|_H^2 \leq 0.
\end{equation}
Hence
$$
\limsup_{n\to\infty}\langle\hat{u}_n',u-\bar{u}_n\rangle_{\C{V}^*\times \C{V}} \leq \frac{1}{2}(\|u_0\|_H^2-\liminf_{n\to\infty}\|\hat{u}_n(T)\|_H^2) + \langle u',u\rangle_{\C{V}^*\times \C{V}}.
$$
Finally, we observe that $\|\hat{u}_n(T)\|_H$ is bounded and therefore we may assume that for a subsequence $\hat{u}_n(T) \to w$ weakly in $H$ with $w\in H$. It follows from (\ref{eqn:strongconv}) that $\hat{u}_n(T) \to u(T)$ strongly in $V^*$. We conclude that $w=u(T)$ and the convergence holds for the whole subsequence for which the assertion of Lemma \ref{lemma:limit} holds. From the weak lower semicontinuity of norm, we have
$$
\limsup_{n\to\infty}\langle\hat{u}_n',u-\bar{u}_n\rangle_{\C{V}^*\times \C{V}} \leq \frac{1}{2}(\|u_0\|_H^2-\|u(T)\|_H^2) + \langle u',u\rangle_{\C{V}^*\times \C{V}}=0,
$$
which gives
\begin{equation}\label{eqn:AA}
\limsup_{n\to\infty}\langle \C{A} \bar{u}_n,\bar{u}_n - u\rangle_{\C{V}^*\times \C{V}}
\leq 0.
\end{equation}
Using the pseudomonotonicity of the Nemytskii operator (Lemma \ref{lemma:pseudo:nemytskii}) we conclude that  $\langle \C{A}u, u-y\rangle_{\C{V}^*\times\C{V}} \leq \liminf_{n\to\infty}\langle \C{A}\bar{u}_n, \bar{u}_n-y\rangle_{\C{V}^*\times\C{V}}$ for all $y\in\C{V}$. The assertion follows by (\ref{eqn:zeta}) taking respectively $y=u+w$ and $y=u-w$, where $w\in \C{V}^*$.
\end{proof}

\begin{remark}Note that if $\theta=1$ then the Theorem 
\ref{thm:main} remains valid if $\{u_{0n}\}\subset H$ and not necessarily $\{u_{0n}\}\subset V$. In this case the piecewise linear interpolant $\hat{u}_n$ does not assume values in $V$, but only in $H$ near the starting time point $t=0$.
\end{remark}

\section{Strong convergence results}\label{sec:improved}

In this section we provide two more results on the convergence of approximate solutions. First we show that piecewise constant interpolants converge
strongly in $\C{V}$ provided $A(t,\cdot)$ are of type $(S)_+$. Then we show, following
\cite{Emmrich2009}, that under the restriction on the time grids, piecewise linear interpolants converge weakly in $\C{V}$ and if additionally
$A$ is H\"{o}lder continuous with respect to time, then they converge pointwise strongly in $H$.

\begin{theorem}
If, in addition to assumptions $H(A), H(F), H(U), H_0$ and $H(t)$, the operator $A(t,\cdot)$ is of type $(S)_+$ for a.e. $t\in (0,T)$, and if $\theta\in [\frac{1}{2},1]$, then the convergence $\bar{u}_n\to u$ holds in the strong topology of $\C{V}$.
\end{theorem}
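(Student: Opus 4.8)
The plan is to apply directly the $M^{p,q}(0,T;V,V^*)$-$(S)_+$ property of the Nemytskii operator $\C{A}$, which is established in Lemma \ref{lemma:splus} precisely under the additional hypothesis that $A(t,\cdot)$ is of type $(S)_+$ for a.e. $t$. Since essentially all of the work has already been carried out in the proof of Theorem \ref{thm:main}, the argument reduces to verifying that the three hypotheses in the definition of the $W$-$(S)_+$ property hold with $X=\C{V}$ and $W=M^{p,q}(0,T;V,V^*)$.

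First I would recall that $\bar{u}_n\to u$ weakly in $\C{V}$ by Lemma \ref{lemma:limit}. Next, the required boundedness of the sequence $\{\bar{u}_n\}$ in $M^{p,q}(0,T;V,V^*)=\C{V}\cap BV^q(0,T;V^*)$ follows from Lemma \ref{lemma:bounded}, which states exactly that $\{\bar{u}_n\}$ is bounded both in $\C{V}$ and in $BV^q(0,T;V^*)$. Finally, the decisive condition $\limsup_{n\to\infty}\langle \C{A}\bar{u}_n, \bar{u}_n-u\rangle_{\C{V}^*\times\C{V}}\leq 0$ is nothing but inequality (\ref{eqn:AA}), derived in the course of proving Theorem \ref{thm:main}. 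With these three facts in hand, the $M^{p,q}(0,T;V,V^*)$-$(S)_+$ property of $\C{A}$ yields at once $\bar{u}_n\to u$ strongly in $\C{V}$, which is the assertion.

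I do not expect any genuine obstacle, because the hard part---extracting (\ref{eqn:AA}) from the energy identity, the nonpositivity (\ref{eqn:BBB}) of the cross term $(\hat{u}_n',\hat{u}_n-\bar{u}_n)_{\C{H}\times\C{H}}$, and the weak convergence of $\hat{u}_n(T)$ in $H$---was already performed for the weak convergence result. The only point worth emphasizing is that the restriction $\theta\in[\frac{1}{2},1]$ is used in two places on which this proof depends: it guarantees the sign $(2\theta-1)\geq 0$ that is needed both for the a priori estimates of Lemma \ref{lemma:estimates} and for the nonpositivity (\ref{eqn:BBB}), and these in turn underlie inequality (\ref{eqn:AA}). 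Once (\ref{eqn:AA}) is granted, the upgrade from weak to strong convergence in $\C{V}$ is an immediate consequence of the $(S)_+$-type structure of $\C{A}$ and requires no further computation.
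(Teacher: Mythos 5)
Your proposal is correct and coincides with the paper's own proof: both invoke inequality (\ref{eqn:AA}) from the proof of Theorem \ref{thm:main}, the weak convergence $\bar{u}_n\to u$ in $\C{V}$, and the boundedness of $\{\bar{u}_n\}$ in $M^{p,q}(0,T;V,V^*)$ from Lemma \ref{lemma:bounded}, and then conclude via the $M^{p,q}(0,T;V,V^*)$-$(S)_+$ property of $\C{A}$ given by Lemma \ref{lemma:splus}. Your additional remarks on where $\theta\in[\frac{1}{2},1]$ enters are accurate and consistent with the paper's argument.
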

\begin{proof} In the proof of Theorem \ref{thm:main} we have shown that
$$
\limsup_{n\to\infty}\langle \C{A} \bar{u}_n,\bar{u}_n - u\rangle_{\C{V}^*\times \C{V}}
\leq 0,
$$
cf (\ref{eqn:AA}). Since $\bar{u}_n\to u$ weakly in $\C{V}$ and the sequence $\bar{u}_n$ is bounded in $M^{p,q}(0,T;V,V^*)$ the conclusion follows easily from Lemma \ref{lemma:splus}.
\end{proof}

\noindent We remark that there is no convergence of piecewise linear interpolants $\hat{u}_n\to u$ in the weak topology of $\C{V}$ unless we impose the restrictive assumptions on the time grid (see \cite{Emmrich2009}). Under these assumptions, it is also possible to show that $\hat{u}_n(t)\to u(t)$ strongly in $H$ for all $t\in [0,T]$ (note that this convergence for a.e. $t\in [0,T]$ follows from Lions-Aubin lemma). To this end, let $r^k_n=\frac{\tau^k_n}{\tau^{k-1}_n}$ for $k\in \{2,\ldots,N_n\}$ and $r^{max}_n=\max_{k\in \{2,\ldots,N_n\}}r^k_n$. We formulate the following result

\begin{theorem}\label{thm:improved}
Under assumptions $H(A), H(F), H(U), H_0, H(t)$,  $\theta\in [\frac{1}{2},1]$,
\begin{equation}\label{eqn:initoinfty}
\|u_{0n}\|_V\leq \frac{C}{\sqrt[p]{\tau^{max}_n}},
\end{equation}
and if there exists the constant $R>0$ such that
\begin{equation}\label{eqn:timedec}
r^{max}_n\leq R < \left(\frac{\theta}{1-\theta}\right)^p\ \mbox{for all}\  n\in \mathbb{N},
\end{equation}
(for $\theta=1$ the constant $R$ can be chosen arbitrary) then for the convergent subsequence established by Lemma \ref{lemma:limit},
we have $\hat{u}_n\to u$ weakly in $\C{V}$. Moreover if the nonlinear operator $A$
is H\"{o}lder continuous with respect to time, in the sense that for all $s,t\in [0,T]$ and for all $v\in V$ we have
$$
\|A(t,v)-A(s,v)\|_{V^*}\leq (C_1+C_2\|v\|^\delta)|t-s|^\gamma,
$$
where $C_1,C_2>0$, $\gamma\in (0,1]$ and $\delta\in (0,p(\gamma+1)-1)$, then
$\hat{u}_n(t)\to u(t)$ strongly in $H$ for all $t\in [0,T]$.
\end{theorem}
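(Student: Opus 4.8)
The plan is to prove the two assertions in turn: the weak $\C V$-convergence of $\hat u_n$ is a boundedness argument, while the pointwise strong $H$-convergence is an energy argument whose heart is a localised passage to the limit in the nonlinear term.

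\emph{Weak convergence in $\C V$.} Since Lemma \ref{lemma:limit} already gives the weak-$*$ $L^\infty(0,T;H)$ and (through (\ref{eqn:strongconv})) the strong $C([0,T];V^*)$ convergence of $\hat u_n$, it suffices to bound $\{\hat u_n\}$ in $\C V$; any weak-$\C V$ limit of a subsequence then coincides with $u$ by uniqueness of the $C([0,T];V^*)$-limit. By convexity of $\|\cdot\|^p$ along each segment one has $\|\hat u_n\|_{\C V}^p\le \tfrac12\sum_k\tau^k_n(\|u^{k-1}_n\|^p+\|u^k_n\|^p)$, so I only need to bound $\sum_k\tau^k_n\|u^k_n\|^p$. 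Writing $\|u^k_n\|\le \tfrac1\theta\|u^{k-1+\theta}_n\|+\tfrac{1-\theta}\theta\|u^{k-1}_n\|$ and using $(a+b)^p\le \lambda^{1-p}a^p+(1-\lambda)^{1-p}b^p$, then multiplying by $\tau^k_n$, summing, and invoking $\tau^{k+1}_n\le r^{max}_n\tau^k_n\le R\tau^k_n$ from $H(t)(ii)$, I obtain a self-referential estimate $\sum_k\tau^k_n\|u^k_n\|^p\le C\sum_k\tau^k_n\|u^{k-1+\theta}_n\|^p+\mu\sum_k\tau^k_n\|u^k_n\|^p+C\tau^1_n\|u_{0n}\|^p$ with $\mu=\frac{(1-\theta)^pR}{(1-\lambda)^{p-1}\theta^p}$. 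Condition (\ref{eqn:timedec}) is exactly what permits a choice of $\lambda$ so small that $\mu<1$; the first sum is bounded by Lemma \ref{lemma:estimates}, and the boundary term by (\ref{eqn:initoinfty}), which yields $\tau^1_n\|u_{0n}\|^p\le C^p$. Absorbing the $\mu$-term gives the $\C V$-bound. (For $\theta=1$ the factor $\tfrac{1-\theta}\theta$ vanishes, whence $R$ is unrestricted.)

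\emph{Setup for pointwise convergence.} From (\ref{eqn:strongconv}) and the uniform bound $\sup_n\sup_t\|\hat u_n(t)\|_H<\infty$ (Lemma \ref{lemma:bounded}) I first get $\hat u_n(t)\rightharpoonup u(t)$ weakly in $H$ for \emph{every} $t$; by weak lower semicontinuity of the norm it then remains to prove $\limsup_n\|\hat u_n(t)\|_H\le\|u(t)\|_H$ for each fixed $t$. The engine is the discrete energy balance obtained by testing (\ref{eq:theta_scheme}) with $u^{k-1+\theta}_n$ and summing: at any grid point $t^m_n$, $\tfrac12\|u^m_n\|_H^2\le\tfrac12\|u_{0n}\|_H^2+\int_0^{t^m_n}\langle \pi^{q,V^*}_nf-\pi^{q,V^*}_n\C A\bar u_n-\bar\eta_n,\bar u_n\rangle\,ds$. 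Here the H\"older hypothesis is decisive: estimating $\|\C A\bar u_n-\pi^{q,V^*}_n\C A\bar u_n\|_{V^*}\le (C_1+C_2\|\bar u_n\|^\delta)(\tau^{max}_n)^\gamma$ on each interval and inserting the worst-case bound $\|u^{k-1+\theta}_n\|^p\le M/\tau^{min}_n$ together with $H(t)(ii)$, the exponents balance precisely as $\gamma q-(\delta q-p)/p>0\iff\delta<p(\gamma+1)-1$, so that $\pi^{q,V^*}_n\C A\bar u_n-\C A\bar u_n\to 0$ \emph{strongly} in $\C V^*$. This upgrades the merely weak statement (\ref{eqn:auxiliary}) and lets me replace $\pi^{q,V^*}_n\C A\bar u_n$ by $\C A\bar u_n$ in any pairing against a bounded $\C V$-sequence with an error uniform in $t$.

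\emph{The cross term and conclusion.} The main obstacle is controlling $\int_0^t\langle\C A\bar u_n,\bar u_n-u\rangle$ for each individual $t$, not merely over $[0,T]$ as in (\ref{eqn:AA}). Passing to the limit in the $f$- and $\bar\eta_n$-terms (via $\pi^{q,V^*}_nf\to f$ and the strong convergence (\ref{eqn:tracelimit})) and using the energy identity for $u$ read off from (\ref{eqn:actual}), together with $\liminf_n\tfrac12\|u^m_n\|_H^2\ge\tfrac12\|u(t)\|_H^2$, I first derive $\limsup_n\int_0^{t^m_n}\langle\pi^{q,V^*}_n\C A\bar u_n,\bar u_n-u\rangle\le 0$, and hence $\limsup_n\int_0^t\langle\C A\bar u_n,\bar u_n-u\rangle\le 0$; for this transfer I take $t^m_n$ to be the \emph{first} grid point $\ge t$, so the stray integral over $(t,t^m_n)$ has the favourable sign, its integrand being bounded below by a fixed $L^1$ function through the coercivity $H(A)(iii)$. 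The matching lower bound comes from the $M^{p,q}(0,t;V,V^*)$-pseudomonotonicity of the Nemytskii operator (Lemma \ref{lemma:pseudo:nemytskii} on $(0,t)$): taking $y=u$ in its defining inequality gives $0\le\liminf_n\int_0^t\langle\C A\bar u_n,\bar u_n-u\rangle$, so the cross term tends to $0$ for every $t$. Feeding this back into the energy balance yields $\limsup_n\tfrac12\|u^m_n\|_H^2\le\tfrac12\|u(t)\|_H^2$, which with the weak convergence forces $u^m_n\to u(t)$ strongly in $H$; the same computation at the neighbouring grid value shows $\hat u_n(t)-u^m_n\to 0$ in $H$, whence $\hat u_n(t)\to u(t)$ strongly in $H$ for all $t$. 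I expect the localisation to be the delicate part — both the one-sided choice of grid point needed for $\limsup\le 0$, and the uniform passage from the grid values to $\hat u_n(t)$, since the increments $\|u^m_n-u^{m-1}_n\|_H$ are only controlled on average through the $(2\theta-1)$-term of Lemma \ref{lemma:estimates}.
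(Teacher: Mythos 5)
Your first part (the weak $\C{V}$ convergence) is correct and is essentially the paper's argument: the same starting inequality $\|u^k_n\|\le\tfrac1\theta\|u^{k-1+\theta}_n\|+\tfrac{1-\theta}\theta\|u^{k-1}_n\|$, the same index shift via $H(t)(ii)$ together with (\ref{eqn:initoinfty}), and absorption made possible by the strict inequality in (\ref{eqn:timedec}); you use a weighted convexity inequality where the paper uses Minkowski, which is immaterial. In the second part your route genuinely differs from the paper's, and most of it checks out: your observation that the H\"{o}lder hypothesis yields \emph{strong} $\C{V}^*$ convergence of the commutator $\pi^{q,V^*}_n\C{A}\bar u_n-\C{A}\bar u_n$ (the exponent bookkeeping $\gamma q-(\delta q-p)/p>0\iff\delta<p(\gamma+1)-1$ is right) is actually stronger than what the paper proves --- the paper invokes H\"{o}lder continuity only on the single truncated interval $(t^m_n,t)$, the full-interval contributions cancelling exactly; and your derivation of $\limsup_n\int_0^{t^m_n}\langle\pi^{q,V^*}_n\C{A}\bar u_n,\bar u_n-u\rangle\le 0$ at the first grid point $t^m_n\ge t$, with the stray integral handled by coercivity plus Young (a fixed $L^1$ minorant on a shrinking interval), is sound, as is the pseudomonotonicity lower bound.

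The genuine gap is your last step: ``the same computation at the neighbouring grid value shows $\hat u_n(t)-u^m_n\to0$ in $H$.'' It is not the same computation. Your one-sided trick works only for grid points \emph{above} $t$: there the stray integral $\int_t^{t^m_n}\langle\C{A}\bar u_n,\bar u_n-u\rangle\,ds$ needs a \emph{lower} bound, which coercivity provides. To conclude for $\hat u_n(t)$, a convex combination of $u^{m-1}_n$ and $u^m_n$, you also need $\limsup_n\|u^{m-1}_n\|_H\le\|u(t)\|_H$ at the grid point \emph{below} $t$, and there the stray integral $\int_{t^{m-1}_n}^t\langle\C{A}\bar u_n,\bar u_n-u\rangle\,ds$ enters with the opposite sign: one needs an \emph{upper} bound, and the growth condition only gives one in terms of $\tau^m_n\|u^{m-1+\theta}_n\|^p$, a quantity that Lemma \ref{lemma:estimates} bounds but does not force to vanish (individual terms of a bounded sum need not tend to zero). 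Likewise $\|u^m_n-u^{m-1}_n\|_H$ is controlled only in sum through the $(2\theta-1)$-term --- and not at all when $\theta=\tfrac12$ --- so $\hat u_n(t)-u^m_n\to0$ in $H$ cannot simply be asserted. The hole is reparable (for instance: fix $s<t$, use the already-established $\int_0^s\langle\C{A}\bar u_n,\bar u_n-u\rangle\to0$, bound $\int_s^{t^{m-1}_n}$ from below by $-\int_s^t h$ with a fixed $h\in L^1(0,T)$, then let $s\uparrow t$), but as written the proof is incomplete at exactly the point you flagged as delicate. The paper avoids grid values altogether: it pairs $\hat u_n'-u'$ with $\bar u_n-u$ over $(0,t)$ and expands this, by integration by parts, into $\tfrac12\|\hat u_n(t)-u(t)\|_H^2-\tfrac12\|u_{n0}-u_0\|_H^2+(\hat u_n',\bar u_n-\hat u_n)_{L^2(0,t;H)}-\langle u',\bar u_n-\hat u_n\rangle$, so that the sign fact $(\hat u_n',\hat u_n-\bar u_n)_{L^2(0,t;H)}\le0$ (the paper's (\ref{eqn:5}), where $\theta\ge\tfrac12$ enters) yields $\limsup_n\|\hat u_n(t)-u(t)\|_H^2\le0$ directly, with no need for weak $H$-limits at fixed $t$ or for convergence of individual grid values.
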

\begin{proof}
\noindent First we prove that $\hat{u}_n\to u$ weakly in $\C{V}$. Following the lines of the proof of Lemma 3 in \cite{Emmrich2009}, starting from the inequality
$$
\|u^k_n\|\leq\frac{1}{\theta}\|u^{k-1+\theta}_n\| + \frac{1-\theta}{\theta}\|u^{k-1}_{n}\|
$$
valid for all $n\in \mathbb{N}$ and $k\in\{1,\ldots,N_n\}$, by a direct computation which uses the Minkowski inequality we get
\begin{eqnarray}
&&\left(\sum_{k=1}^{N_n}\tau^k_n\|u^k_n\|^p\right)^{\frac{1}{p}}\leq \frac{1}{\theta}\left(\sum_{k=1}^{N_n}\tau^k_n\|u^{k-1+\theta}_n\|^p\right)^{\frac{1}{p}} + \\
&&+\frac{1-\theta}{\theta}\sqrt[p]{\tau^{max}_n}\|u_{0n}\| + \frac{1-\theta}{\theta}\sqrt[p]{R}\left(\sum_{k=1}^{N_n}\tau^k_n\|u^k_n\|^p\right)^{\frac{1}{p}}.\nonumber
\end{eqnarray}
By Lemma \ref{lemma:estimates} and the hypothesis (\ref{eqn:initoinfty}) we obtain
$$
\left(\sum_{k=1}^{N_n}\tau^k_n\|u^k_n\|^p\right)^{\frac{1}{p}}\leq M + \frac{1-\theta}{\theta}\sqrt[p]{R}\left(\sum_{k=1}^{N_n}\tau^k_n\|u^k_n\|^p\right)^{\frac{1}{p}},
$$
with $M_1>0$. By (\ref{eqn:timedec}) we have the boundedness of $\left(\sum_{k=1}^{N_n}\tau^k_n\|u^k_n\|^p\right)^{\frac{1}{p}}$.

In order to show the weak convergence $\hat{u}_n\to u$ in $\C{V}$ it is enough to obtain the bound in $\C{V}$ of the considered sequence. We have
$$
\|\hat{u}_n\|_{\C{V}}^p = \sum_{k=1}^{N_n}\int_{t^{k-1}_n}^{t^k_n}\left\|u_n^{k-1+\theta}+\frac{u^k_n-u^{k-1}_n}{\tau^k_n}(t-t^{k-1}_n-\theta\tau^k_n)\right\|^p\ dt.
$$
By a direct calculation, we arrive at
$$
\|\hat{u}_n\|_{\C{V}}^p \leq 2^{p-1}\sum_{k=1}^{N_n}\tau^k_n\|u^{k-1+\theta}_n\|^p +\frac{2^{p-1}(\theta^{p+1}+(1-\theta)^{p+1})}{p+1} \sum_{k=1}^{N_n}\tau^k_n\|u^k_n-u^{k-1}_n\|^p.
$$
We only need to show that the last sum is bounded. This holds due to the following estimate
$$
\sum_{k=1}^{N_n}\tau^k_n\|u^k_n-u^{k-1}_n\|^p \leq 2^{p-1}\sum_{k=1}^{N_n}\tau^k_n\|u^k_n\|^p + 2^{p-1}\sum_{k=1}^{N_n}\tau^k_n\|u^{k-1}_n\|^p \leq
$$
$$
\leq 2^{p-1}(1+R)\sum_{k=1}^{N_n}\tau^k_n\|u^k_n\|^p + 2^{p-1}\tau^{max}_n\|u_{n0}\|^p.
$$
In what follows, we show that $\hat{u}_n(t)\to u(t)$ strongly in $H$ for all $t\in [0,T]$. Subtracting the equation (\ref{eqn:actual}) from (\ref{eqn:theta_scheme}) and taking the duality with $\bar{u}_n-u$, we obtain
\begin{eqnarray}\label{eq:time_t}
&&\langle \hat{u}_n'-u',\bar{u}_n-u \rangle_{L^q(0,t;V^*)\times L^p(0,t;V)}+\label{eqn:oszacowania}\\
&&+\langle \pi_n^{q,V^*}\C{A}\bar{u}_n-\C{A}u, \bar{u}_n-u \rangle_{L^q(0,t;V^*)\times L^p(0,t;V)}+\nonumber\\
&&+\langle \bar{\eta}_n-\xi, \iota\bar{u}^n-\iota u \rangle_{L^q(0,t;U^*)\times L^p(0,t;U)} =\nonumber\\
&&= \langle \pi_n^{q,V^*}f - f, \bar{u}_n-u \rangle_{L^q(0,t;V^*)\times L^p(0,t;V)},\nonumber
\end{eqnarray}
where $\xi(s) \in F(s,\iota u(s))$ for a.e. $s\in (0,T)$ and $\bar{\eta}_n$ is given by (\ref{defn:eta_bar}). We pass with $n$ to infinity. Since $\pi_n^{q,V^*}f \to f$
strongly in $\C{V}^*$ and moreover in $L^q(0,t;V^*)$ and $\bar{u}_n \to u$ weakly in $\C{V}$ and moreover in $L^p(0,t;V)$ we have
\begin{equation}\label{eqn:oszacowania2}
\lim_{n\to \infty}\langle \pi_n^{q,V^*}f - f, \bar{u}_n-u \rangle_{L^q(0,t;V^*)\times L^p(0,t;V)} = 0.
\end{equation}
Now since $\iota\bar{u}^n\to \iota u$ strongly in $\C{U}$ and moreover in $L^p(0,t;U)$ and from growth condition and estimate on $\bar{u}_n$ the sequence $\bar{\eta}_n$ is bounded in $\C{U}^*$ and moreover in $L^q(0,t;U^*)$, we get
\begin{equation}\label{eqn:oszacowania3}
\lim_{n\to \infty}\langle \eta_n-\xi, \iota\bar{u}^n-\iota u \rangle_{L^q(0,t;U^*)\times L^p(0,t;U)} = 0.
\end{equation}
Using (\ref{eqn:oszacowania2}) and (\ref{eqn:oszacowania3}) in (\ref{eqn:oszacowania}), we have
\begin{equation}\label{eqn:1}
\lim_{n\to\infty}\langle \hat{u}_n'-u'+\pi_n^{q,V^*}\C{A}\bar{u}_n,\bar{u}_n-u \rangle_{L^q(0,t;V^*)\times L^p(0,t;V)} = 0.
\end{equation}

We show the following result
\begin{equation}\label{eqn:2}
\lim_{n\to\infty} \langle \pi_n^{q,V^*}\C{A}\bar{u}_n-\C{A}\bar{u}_n,\bar{u}_n-u \rangle_{L^q(0,t;V^*)\times L^p(0,t;V)} = 0.
\end{equation}
From (\ref{eqn:auxiliary}), it follows that $\pi_n^{q,V^*}\C{A}\bar{u}_n-\C{A}\bar{u}_n\to 0$ weakly in $L^q(0,t;V^*)$, so it is enough to
show that
$$
\lim_{n\to\infty} \langle \pi_n^{q,V^*}\C{A}\bar{u}_n-\C{A}\bar{u}_n,\bar{u}_n \rangle_{L^q(0,t;V^*)\times L^p(0,t;V)} = 0.
$$
To this end, let us denote by $m$ the index of the largest point of the $n$-th grid which is less than $t$. We have
$$
\langle \pi_n^{q,V^*}\C{A}\bar{u}_n-\C{A}\bar{u}_n,\bar{u}_n \rangle_{L^q(0,t;V^*)\times L^p(0,t;V)} =
$$
$$
=\sum_{k=1}^{m}\int_{t^{k-1}_n}^{t^k_n}\left\langle \frac{1}{\tau^k_n}\int_{t^{k-1}_n}^{t^k_n}A(s,u^{k-1+\theta})\ ds-A(r,u^{k-1+\theta}),u^{k-1+\theta}_n \right\rangle\ dr +
$$
$$
+\int_{t^{m}_n}^{t}\left\langle \frac{1}{\tau^{m+1}_n}\int_{t^{m}_n}^{t^{m+1}_n}A(s,u^{m+\theta})\ ds-A(r,u^{m+\theta}),u^{m+\theta}_n \right\rangle\ dr.
$$
The first term in the right-hand side of above relation is equal to zero analogously to the proof of (\ref{eqn:int_nemytskii}). We estimate the second term. Using the
H\"{o}lder continuity of $A$, we obtain
$$
\left|\int_{t^{m}_n}^{t}\left\langle \frac{1}{\tau^{m+1}_n}\int_{t^{m}_n}^{t^{m+1}_n}A(s,u^{m+\theta})\ ds-A(r,u^{m+\theta}),u^{m+\theta}_n \right\rangle\ dr\right| \leq
$$
$$
\leq \frac{1}{\tau^{m+1}_n} \left|\int_{t^{m}_n}^{t}\int_{t^{m}_n}^{t^{m+1}_n}\| A(s,u^{m+\theta})-A(r,u^{m+\theta})\|_{V^*}\|u^{m+\theta}_n\|\ ds \ dr\right| \leq
$$
$$
\leq \frac{2 (\tau^{m+1}_n)^{\gamma+1}}{(\gamma+1)(\gamma+2)}(C_1+C_2\|u^{m+\theta}_n\|^\delta)\|u^{m+\theta}_n\| \leq
$$
$$
\leq (\tau^{m+1}_n)^{\gamma+1}(C_3+C_4\|u^{m+\theta}_n\|^{\delta+1}),
$$
where $C_3$ and $C_4$ are positive constants. Since $\tau^{m+1}_n$ tends to zero as $n\to \infty$, the term $C_3(\tau^{m+1}_n)^{\gamma+1}$ also tends to zero. The remaining term can be represented as
$$
C_4 (\tau^{m+1}_n)^{\gamma+1}\|u^{m+\theta}_n\|^{\delta+1} = C_4 (\tau^{m+1}_n)^{\gamma+1 - \frac{\delta+1}{p}}(\tau^{m+1}_n\|u^{m+\theta}_n\|^p)^{\frac{\delta+1}{p}}.
$$
The expression $\tau^{m+1}_n\|u^{m+\theta}_n\|^p$ is bounded from Lemma \ref{lemma:estimates} and, since
$\gamma+1 - \frac{\delta+1}{p} > 0$, we get $(\tau^{k(n)+1}_n)^{\gamma+1 - \frac{\delta+1}{p}} \to 0$ as $n\to\infty$ .
Having shown (\ref{eqn:2}), we deduce from (\ref{eqn:1}) that
\begin{equation}\label{eqn:3}
\lim_{n\to\infty}\langle \hat{u}_n'-u'+\C{A}\bar{u}_n,\bar{u}_n-u \rangle_{L^q(0,t;V^*)\times L^p(0,t;V)} = 0.
\end{equation}

Now we need to show that
$$
\limsup_{n\to\infty}\langle \C{A} \bar{u}_n,\bar{u}_n - u\rangle_{L^q(0,t;V^*)\times L^p(0,t;V)}\leq 0.
$$
This follows analogously as the proof of (\ref{eqn:AA}) in Theorem \ref{thm:main}. The only delicate step in the proof is showing
$(\hat{u}_n',\hat{u}_n-\bar{u}_n)_{L^2(0,t;H)\times L^2(0,t;H)}\leq 0$. This follows from the calculation
\begin{eqnarray}
&&(\hat{u}_n',\hat{u}_n-\bar{u}_n)_{L^2(0,t;H)\times L^2(0,t;H)} = - \frac{2\theta-1}{2}\sum_{k=1}^{m}\|u^k_n-u^{k-1}_n\|_H^2 +\nonumber\\
&&+\int_{t^{m}_n}^t\left\|\frac{u^{k+1}_n-u^k_n}{\tau^{m+1}_n}\right\|_H^2(t-t^{m}_n-\theta\tau^{m+1}_n)\ dt\leq 0,\label{eqn:5}
\end{eqnarray}
where the inequality holds since the value of the integral is nonpositive for $t\in (t^{k(n)}_n, t^{k(n)+1}_n)$.
Now, from Lemma \ref{lemma:pseudo:nemytskii}, it follows that
$$
0\leq \liminf_{n\to\infty}\langle \C{A} \bar{u}_n,\bar{u}_n - u\rangle_{L^q(0,t;V^*)\times L^p(0,t;V)},
$$
so
$$
\langle \C{A} \bar{u}_n,\bar{u}_n - u\rangle_{L^q(0,t;V^*)\times L^p(0,t;V)} \to 0.
$$
Thus (\ref{eqn:3}) implies
\begin{equation}\label{eq:derivative_t}
\lim_{n\to\infty}\langle \hat{u}_n'-u',\bar{u}_n-u \rangle_{L^q(0,t;V^*)\times L^p(0,t;V)} = 0.
\end{equation}
The last equation can be reformulated as
$$
\lim_{n\to\infty}\left(\frac{1}{2}(\|\hat{u}_n(t)-u(t)\|_H^2 - \|u_{n0}-u_0\|_H^2 )+ \right.
$$
$$\left.+( \hat{u}_n',\bar{u}_n-\hat{u}_n )_{L^2(0,t;H)\times L^2(0,t;H)}-\langle u',\bar{u}_n-\hat{u}_n \rangle_{L^q(0,t;V^*)\times L^p(0,t;V)}\right) = 0.
$$
Since $u_{n0}\to u_0$ strongly in $H$ and $\bar{u}_n-\hat{u}_n\to 0$ weakly in $\C{V}$ and also weakly in $L^p(0,t;V)$ we can write
$$
0 = \lim_{n\to\infty}\left(\frac{1}{2}\|\hat{u}_n(t)-u(t)\|_H^2+ ( \hat{u}_n',\bar{u}_n-\hat{u}_n )_{L^2(0,t;H)\times L^2(0,t;H)}\right).
$$
We apply (\ref{eqn:5}) and get
$$
0 \geq \frac{1}{2}\limsup_{n\to\infty}\|\hat{u}_n(t)-u(t)\|_H^2,
$$
and it follows that $\|\hat{u}_n(t)-u(t)\|_H\to 0$ for all $t\in [0,T]$, which concludes the proof.
\end{proof}

We consider seperately the improved convergence in the case when $\theta=1$ and $\{u_{n0}\}\subset H$. We formulate the following Theorem
\begin{theorem}\label{thm:improved}
Let $\varepsilon>0$ and $\{u_{n0}\}\subset H$ be such that $u_{n0}\to u_0$ strongly in $H$. Under assumptions $H(A), H(F), H(U), H_0, H(t)$,  $\theta=1$,
and if there exists the constant $R>0$ such that
\begin{equation}\label{eqn:timedec}
r^{max}_n\leq R,
\end{equation}
then for the convergent subsequence established by Lemma \ref{lemma:limit},
we have $\hat{u}_n\to u$ weakly in $L^p(\varepsilon,T;V)$. Moreover if the nonlinear operator $A$
is H\"{o}lder continuous with respect to time, in the sense that for all $s,t\in [0,T]$ and for all $v\in V$ we have
$$
\|A(t,v)-A(s,v)\|_{V^*}\leq (C_1+C_2\|v\|^\delta)|t-s|^\gamma,
$$
where $C_1,C_2>0$, $\gamma\in (0,1]$ and $\delta\in (0,p(\gamma+1)-1)$, then
$\hat{u}_n(t)\to u(t)$ strongly in $H$ for all $t\in [\varepsilon,T]$.
\end{theorem}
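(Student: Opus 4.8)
The plan is to mirror the proof of the preceding theorem, exploiting the simplifications that occur when $\theta=1$ and compensating for the fact that the initial data $u_{0n}$ now lie only in $H$. The confinement to $[\varepsilon,T]$ is forced precisely by this weaker control: across the first grid cell the piecewise linear interpolant $\hat{u}_n$ interpolates between $u_{0n}\in H$ and $u^1_n\in V$, so it cannot be bounded in $V$ near the origin, and neither can the crossing term treated at the end of the argument.

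First I would establish $\hat{u}_n\to u$ weakly in $L^p(\varepsilon,T;V)$. For $\theta=1$ one has $u^{k-1+\theta}_n=u^k_n$, hence $\sum_{k=1}^{N_n}\tau^k_n\|u^k_n\|^p=\sum_{k=1}^{N_n}\tau^k_n\|u^{k-1+\theta}_n\|^p$ is bounded directly by Lemma~\ref{lemma:estimates}, so the growth hypothesis on $\|u_{0n}\|_V$ required in the previous theorem is not needed here. To bound $\|\hat{u}_n\|^p_{L^p(\varepsilon,T;V)}$ I would follow the computation of the preceding proof, isolating the term $\sum\tau^k_n\|u^k_n-u^{k-1}_n\|^p$ and estimating it by $\sum\tau^k_n\|u^k_n\|^p+\sum\tau^k_n\|u^{k-1}_n\|^p$. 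The grid-ratio bound $r^{max}_n\le R$ then absorbs the shifted sum, since for $k\ge 2$ one has $\tau^k_n\le R\,\tau^{k-1}_n$ and therefore $\sum_{k\ge 2}\tau^k_n\|u^{k-1}_n\|^p\le R\sum_{j}\tau^j_n\|u^j_n\|^p$; because $\tau^{max}_n\to 0$, for $n$ large the single singular contribution $\tau^1_n\|u_{0n}\|_V^p$ comes from a cell contained in $[0,\varepsilon)$ and is discarded in the $L^p(\varepsilon,T;V)$ norm. This yields boundedness in $L^p(\varepsilon,T;V)$, and the weak limit is identified with $u$ by Lemma~\ref{lemma:limit}.

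For the pointwise strong convergence in $H$ I would reproduce the energy argument of the preceding theorem, but carried out over $[s,t]$ instead of $[0,t]$. Applying Proposition~\ref{prop:embedding} on each interval $[\varepsilon',T]$, where $\hat{u}_n$ is now bounded in $L^p(\varepsilon',T;V)$ with $\hat{u}_n'$ bounded in $L^q(\varepsilon',T;V^*)$, gives $\hat{u}_n(\sigma)\to u(\sigma)$ strongly in $H$ for almost every $\sigma\in(0,T)$; I would fix one such good point $s\in(0,\varepsilon)$. On $[s,t]$ the chain (\ref{eqn:1})--(\ref{eqn:3}) goes through unchanged, the consistency defect $\pi^{q,V^*}_n\C{A}\bar{u}_n-\C{A}\bar{u}_n$ being controlled on the terminal partial cell through the H\"{o}lder continuity of $A$ together with the bound on $\tau^{m+1}_n\|u^{m+\theta}_n\|^p$, exactly as before. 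The sign relation $(\hat{u}_n',\hat{u}_n-\bar{u}_n)_{L^2(s,t;H)}\le 0$ persists because for $\theta=1$ the coefficient $2\theta-1=1$ is positive while the two partial-cell integrals at $s$ and at $t$ are each nonpositive, so the analogue of (\ref{eqn:5}) holds on $[s,t]$. Combining this with the pseudomonotonicity of $\C{A}$ (Lemma~\ref{lemma:pseudo:nemytskii}) reproduces $\langle\C{A}\bar{u}_n,\bar{u}_n-u\rangle_{L^q(s,t;V^*)\times L^p(s,t;V)}\to 0$, as in (\ref{eqn:AA}).

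The main obstacle, and the reason both convergences are restricted to $[\varepsilon,T]$, is the crossing term $\langle u',\bar{u}_n-\hat{u}_n\rangle_{L^q(s,t;V^*)\times L^p(s,t;V)}$: passing it to zero requires $\bar{u}_n-\hat{u}_n\to 0$ weakly in $L^p(\,\cdot\,;V)$, which is available only away from the origin, where both interpolants are bounded in $V$. Starting the energy balance at the interior good point $s$ turns the spurious initial contribution into $\|\hat{u}_n(s)-u(s)\|_H^2\to 0$, so the reformulation of (\ref{eqn:3}) reduces to $\lim_n\big(\tfrac{1}{2}\|\hat{u}_n(t)-u(t)\|_H^2+(\hat{u}_n',\bar{u}_n-\hat{u}_n)_{L^2(s,t;H)}\big)=0$; since the last inner product is nonnegative, weak lower semicontinuity of the $H$-norm gives $\limsup_n\|\hat{u}_n(t)-u(t)\|_H^2\le 0$, hence strong convergence at every $t\in[\varepsilon,T]$.
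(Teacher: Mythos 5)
Your proposal is correct and follows essentially the same route as the paper's own proof: boundedness of the interpolants in $L^p(\varepsilon,T;V)$ by discarding the first grid cell (which alone involves $u_{0n}\in H$) and absorbing the shifted sum via $r^{max}_n\le R$, then Lions--Aubin compactness to produce an interior ``good point'' $s$ of strong $H$-convergence, and finally the energy/pseudomonotonicity argument of the preceding theorem run on $[s,t]$ instead of $[0,t]$, closed by the pointwise sign $(\hat{u}_n',\hat{u}_n-\bar{u}_n)_H\le 0$ valid for $\theta=1$ and weak lower semicontinuity of the $H$-norm. Your explicit verification that both partial-cell contributions are nonpositive when $\theta=1$ is a detail the paper glosses over, but the substance of the two arguments is identical.
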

\begin{proof}
Let us first estimate
$$
\int_{t^{k-1}_n}^{t^k_n}\|\bar{u}_n(t)-\hat{u}_n(t)\|^p\ dt = \frac{\tau^k_n}{p+1}\|u^k_{n}-u^{k-1}_n\|^p\leq \frac{2^{p-1}}{p+1}(\tau^k_n \|u^k_n\|^p+ R\tau^{k-1}_n\|u^{k-1}_n\|^p).
$$
Fix $\varepsilon>0$. Let $K(n,\varepsilon)$ be the smallest index such that $t^{K(n,\varepsilon)}_n>\varepsilon$. We have
$$
\|\bar{u}_n-\hat{u}_n\|^p_{L^p(\varepsilon,T;V)}\leq \sum_{i=K(n,\varepsilon)}^{N_n} \frac{2^{p-1}}{p+1}(\tau^k_n \|u^k_n\|^p+ R\tau^{k-1}_n\|u^{k-1}_n\|^p) \leq 
$$
$$
\leq \frac{2^{p-1}(1+R)}{p+1} \sum_{i=K(n,\varepsilon)-1}^{N_n} \tau^k_n \|u^k_n\|^p.
$$
Since $\tau^{max}_n\to 0$ for large enough $n$ we have 
$K(n,\varepsilon)-1 \geq 1$ and, from lemmata  \ref{lemma:estimates} and \ref{lemma:bounded} (see also Remark \ref{rem:ini_h}) we obtain that $\hat{u}_n$ is bounded in $L^p(\varepsilon,T;V)$. It follows that, for a subsequence, $\hat{u}_n\to u$ weakly in this space.
Since, by Lemma \ref{lemma:bounded}, $\hat{u}_n'$ is bounded in $\C{V}^*$ and in $L^q(\varepsilon,T;V^*)$, then from the Lions-Aubin compactness lemma it follows that $\hat{u}_n\to u$ strongly in $L^p(\varepsilon,T;H)$. From arbitrariety of $\varepsilon$ it follows that, for another subsequence, $\hat{u}_n(t)\to u(t)$ strongly in $H$ for a.e. $t\in (0,T)$. To prove the strong convergence for all $t\in [0,T]$ pick $t>0$. We are able to find $\varepsilon\in (0,t)$ such that $\hat{u}_n(\varepsilon)\to u(\varepsilon)$ strongly in $H$. We proceed analogously to the argument in the proof of Theorem \ref{thm:improved} and from the analogue of (\ref{eq:time_t}) with $(0,t)$ replaced with $(\varepsilon,t)$, using the H\"{o}lder continuity of $A$, we obtain the following analogue of (\ref{eq:derivative_t})
\begin{equation}
\lim_{n\to\infty}\langle \hat{u}_n'-u',\bar{u}_n-u \rangle_{L^q(\varepsilon,t;V^*)\times L^p(\varepsilon,t;V)} = 0.
\end{equation}
This means that
$$
\lim_{n\to\infty}\left(\frac{1}{2}(\|\hat{u}_n(t)-u(t)\|_H^2 - \|\hat{u}_n(\varepsilon)-u(\varepsilon)\|_H^2 )+ \right.
$$
$$\left.+( \hat{u}_n',\bar{u}_n-\hat{u}_n )_{L^2(\varepsilon,t;H)\times L^2(\varepsilon,t;H)}-\langle u',\bar{u}_n-\hat{u}_n \rangle_{L^q(\varepsilon,t;V^*)\times L^p(\varepsilon,t;V)}\right) = 0,
$$
and the proof concludes exactly the same as the proof of Theorem \ref{thm:improved}. 
\end{proof}

\end{document}